\theoremstyle{plain}
\newtheorem{theorem}{Theorem}[section]
\newtheorem{lemma}[theorem]{Lemma}
\newtheorem{proposition}[theorem]{Proposition}
\theoremstyle{definition}
\newtheorem{definition}[theorem]{Definition}
\theoremstyle{remark}
\begin{document}

% The following two lines are for production department only. Do not change.
%\jvol{00} \jnum{00} \jyear{2015}
%\doi{insert doi link}

% If this is an Editorial by guest editors, uncomment the next line. If this is a research article, do not change anything,
% since research article is the default and we do not mark articles as ``research articles'' anymore
%\articletype{EDITORIAL}

% Replace the title below by your title. Only the first letter and proper names are capitalized.
\title{The axiom system of classical harmony}

% Write out full first names, do not use initial. Middle initial or middle name(s) may be included if wished.
% Include the comma before the "and" if there are 3 authors.
% In the affiliations, use the format: Department, University, City, Country. No other information is included, such as street.
% In the affiliations, as already done below use a semicolon and \\ to separate successive affiliations, no semicolon on the last affiliation.
% Please use the exact formatting as in the example below, only replace your information. If there are two authors, their names should be separated by ``and''
% The submission requires an anonymous version and a non-anonymous version. To make the anonymous PDF, simply
% comment out author and affiliation information using %
\author{Andr\'{a}s T\'{o}bi\'{a}s\footnote{tobias\_AT\_ math.tu-berlin.de} \\ Institut f\"{u}r Mathematik, Technische Universit\"{a}t Berlin, Germany.\footnote{The work on which this paper is based was done between 2012--2014 for a national research competition and as my Bachelor's thesis \cite{szakdoga} at the Institute of Mathematics, Budapest University of Technology and Economics, Hungary, under the supervision of Dr.~\'{A}kos G.~Horv\'{a}th.}}
%\name{Andr\'{a}s T\'{o}bi\'{a}s\textsuperscript{a}$^{\ast}$\thanks{$^\ast$Email: tobias\_AT\_math.tu-berlin.de},

%\affil{}
%leave blank for production:
%\received{\today}

\maketitle

\begin{abstract}
 This paper provides a new mathematical axiom system for classical harmony, which is a prescriptive rule system for composing music, introduced in the second half of the 18th century. The clearest model of classical harmony is given by the homophonic four-part pieces of music. The form of these pieces is based on the earlier four-part chorale adaptations of J. S. Bach. Our paper logically structures the musical phenomena belonging to the research area of classical harmony. Its main result, the fundamental theorem of tonality, provides a way to construct a complete axiom system which incorporates the well-known classical compositional principles about chord changing and voice leading. In this axiom system, a piece complies with classical harmony if it satisfies the formal requirements of four-part homophony and it does not violate any classical chord-changing or modulational rules. \\
\textbf{Keywords:} axiom system, classical compositional principles, trichotomy of keys, homophonic four-part piece, fundamental theorem of tonality,  chord-changing rules
\end{abstract}

% Please provide five to ten keywords taken from terms used in your manuscript in the ``keywords'' environment below.
% Separate keywords by semicolons followed by a space, no semicolon after the last word. No ``and'' preceding the final word.
% Italicize all foreign words, here and throughout article. German nouns are also capitalized. Delete the examples below.

% If the article has any substantial mathematics or computers at all, please include several American Mathematical Society (AMS) Mathematics Subject Classification (MCS) or the Association for Computing Machinery (ACM) Computing Classification Scheme (CCS) words. Please use the newest versions, currently 2010 and 2012 respectively.
% The AMS MCS codes can be found at: http://www.ams.org/msc/msc2010.html
% The ACM CCS words can be found at: http://www.acm.org/about/class/class/2012
% Separate by semicolons followed by a space, no semicolon after the last one. Delete the example numbers below.

%\begin{classcode}\textit{2010 Mathematics Subject Classification}: 00A65; 20B35; 18B25 \\
%\textit{2012 Computing Classification Scheme}: applied computing
%\end{classcode}

%%%%%%%%%%%%%%%%%%%%%%%%%%%%%%%%%%%%%%%%%%%%%%%%%%%%%%%%%%%%%%%%%%%%%%%%%%%%%%%%%%%%%%%%%%%%%
% After reading the next comment, cut beginning here down to the similarly marked line approximately 477,
% and write or paste in your text.
%%%%%%%%%%%%%%%%%%%%%%%%%%%%%%%%%%%%%%%%%%%%%%%%%%%%%%%%%%%%%%%%%%%%%%%%%%%%%%%%%%%%%%%%%%%%%

% Section and subsection titles only have the first letter capitalized, the rest is lowercase, except of course proper names

\section{Introduction}
The main goal of this paper is to provide a mathematical axiomatization for the strictly homophonic four-part model of classical harmony. This model is mainly based on J.~S.~Bach's four-part chorales, collected in \cite{bach}, which had been written before the start of Viennese classicism. Later, in the second half of the 18th century, almost unequivocal, prescriptive compositional principles were determined for this four-part model. By their nature, these compositional principles form a mathematical axiom system, as soon as all the basic notions of music corresponding to classical harmony are mathematically well-defined. In this paper, we present a possible way of logical ordering of these musical notions, and having these definitions we present a \emph{consistent and complete} axiom system which tells how to write homophonic four-part pieces.

Homophonic four-part pieces are interpreted as special right-continuous functions $M: \mathbb R^+ \to K^4$, where $K$ is an \emph{equal-tempered piano}. Here the real half-line refers to time. The special properties of $M$ are that for any $t \in \mathrm{Dom}~M$, $M(t)$ is a special kind of \emph{chord} in a special four-part version, and that the four voices always change their tones in the same time, yielding a \emph{chord change}. The chords in the ranges of the homophonic four-part pieces have to be some preliminarily given kinds of \emph{triads} and \emph{seventh chords} associated with a musical \emph{key}.

The formulation of the axiom system is the following. The axioms of classical harmony, which are called \emph{compositional principles}, determine whether a homophonic four-part piece whether complies with classical harmony or not. One of the first axioms, the so-called \emph{correctability condition} describes when a homophonic four-part piece complies with classical harmony \emph{apart from the chord-changing points}. Our \emph{fundamental theorem of tonality} (Theorem \ref{vécsey}) gives an equivalent condition for correctability of feasible pieces on a finite time interval. It claims that among these pieces, the correctable ones are exactly the ones which are \emph{locally tonal}. Its proof uses a sequential process for showing correctability of tonal pieces. Consequently, the chord-changing rules of classical harmony can be embedded in the axiom system as constraints. A correctable piece complies with classical harmony in a chord-changing point if none of these constraints is violated. Hence, until it does not cause inconsistence, one can add new chord-changing rules to the axiom system according to new results of music theory, or remove old ones. This way, exercises of writing correct four-part pieces consisting of given chords can virtually be interpreted as constraint programming problems. 

Some chord-changing compositional principles are mathematically described in \cite{pachetroy}. This paper investigates harmonizing four-part chorales algorithmically using constraints, and it turns out that it is not necessary to construct a full mathematical axiom system of classical harmony for this purpose. An axiom system is rather important for educational purposes: in order to teach musician students how to write \emph{perfect} examples for classical chord progression, one has to know all compositional principles. The task of students does not only include harmonizing given soprano melodies but also e.g. finding a correct voice configuration for a piece with given chords. Also in the case of more complex Classicist genres, generalized versions of the four-part rules apply. It is nevertheless true that although in Viennese classicism many compositional rules were indeed prescriptive for real music, composers' practice was substantially more irregular than what classical harmony would indicate, see e.g. \cite[p.~167]{ebcioglu}.

The axiomatization is completed by the \emph{modulation} (musical key change) rules, which give constraints on several consecutive chords which a modulation consists of. These rules are musically quite complex but mathematically less interesting, therefore we omit them; see Section \ref{modulations} of the Appendix for an overview.
As for the compositional principles not detailed in \cite{benson}, we follow the traditional Hungarian music theory coursebook \cite{kesztler}. We use the German notation of classical harmony, according to the Hungarian convention, but in the meaning of the axioms this makes no difference from e.g. the British notation system. 

The contribution of this paper is the construction of a mathematical axiom system for classical harmony. In particular, we classify musical keys. We prove that there are \emph{exactly three different keys} with the common base on the equal-tempered piano, up to enharmony. Our main result is the fundamental theorem of tonality, which makes it possible to embed the Classicist chord-changing compositional principles as constraints to a consistent axiom system. Our work gives a logical basis for writing a new music theory coursebook for high schools. This seems to be necessary in Hungary, and it can also be helpful in other European countries.

The paper is organized as follows. Section \ref{alja} enumerates and logically orders basic musical notions. It also explains compliance of triads and seventh chords with classical harmony. In Section \ref{hangnem}, we define and classify musical keys. In Section \ref{top}, we present the model of homophonic four-part pieces. Musical functions and functional tonality are defined in Section \ref{tonikus}, while the fundamental theorem of tonality and the structure of our axiom system is presented in Section \ref{fő}.

\section{Basic notions of music theory used in classical harmony} \label{alja}
We provide an axiom system for composing homophonic four-part pieces of music, in a first-order language. We use the language of set theory, assuming the Zermelo--Fraenkel--Choice (ZFC) axiom system. We use simple physical properties of the overtone system, but formally these only have arithmetic meaning. As usual in music theory, a \emph{tone} $Y$ is a longitudinal wave moving in an elastic medium with frequency $f(Y)>0$. For a tone $X$ with frequency $f(X)>0$, $X$ is \emph{audible} if $20~\mathrm{Hz}<f(X)<20~000~\mathrm{Hz}$. When speaking about tones, we always mean that the tone is uniquely determined by its frequency and consists of all of its \emph{overtones}. The set of overtones of the tone $X$ is $\lbrace Y \vert~ Y$ is a tone, $\exists n \in \mathbb{N}^+:~f(Y)=n~f(X) \rbrace$. The overtone of $X$ with frequency $n~f(X)$ is called the $n$th overtone of $X$. Thus, when we consider \emph{a tone $X$ with frequency $f(X)$}, $X$ can be mathematically described, e.g., as $X=(0,f(X)) \in (\mathbb{R}^+)^2$, thus the definitions of this paper can be derived consistently from ZFC\label{jujjujjuj}. In this sense, our \emph{compositional principles} which describe compliance of certain musical entities (e.g. triads, four-part pieces or modulations) with classical harmony are just special definitions in the mathematical framework of ZFC. But if one considers classical harmony in itself, they are indeed axioms, in the sense that these properties are postulated about all entities that comply with classical harmony. This is true for all Axioms in this paper. 

In the whole article, $B_r(x)$ denotes the open ball with radius $r$ around the point $x$ in any metric space, further $\overline{A}$ the closure of $A$ and $\partial A$ the boundary of $A$ in any topological space. $\mathrm{Int}$ denotes interior, $\mathrm{Dom}$ domain and $\mathrm{Ran}$ range.

Musical \emph{intervals} are equal distances in the (base 2) logarithmic frequency scale. The most important intervals can be derived from the overtone system, cf. \cite[Section 4.1]{benson}. The interval of a tone and its 2nd overtone is called \emph{perfect octave}, the one of the 2nd and 3rd overtone of a tone \emph{perfect fifth}, the one the 3rd and 4th overtone of a tone \emph{perfect fourth}. %the one of a tone's 4th and 5th overtone is called \emph{major third} and the one of a tone's 5th and 6th overtone is called \emph{minor third}. The interval of a tone and itself is called \emph{perfect prime}.
%If an interval of tones $X$ and $Y$ ($f(X)<f(Y)$) is not greater than an octave, then its \emph{inverse interval} is the interval completing it to a whole octave, this is the interval of $Y$ and $X$'s second overtone. It can easily be seen that the inverse property is a symmetric relation amongst tones. The perfect octave and the perfect prime are the inverses of each other, so are the perfect fifth and the perfect fourth. The inverse of major third is called \emph{minor sixth} and the inverse of minor third is \emph{major sixth}.
We say that a tone $X$ is higher than a tone $Y$ (and $Y$ is lower than $X$) if $f(X)>f(Y)$. Intervals can be summed, hence one can speak about \emph{octave-equivalent} tones $X$ and $Y$, the interval of which is $n$ octaves with $n \in \mathbb{Z}$. If $X$ is $n$ octaves higher than $Y$, this means $f(X)=2^n f(Y)$. Hence, octave equivalence is an equivalence relation on the set of tones. \emph{The octave-eqivalence class} of the tone $X$ will be denoted by $[X]$.
 
%Figure \ref{C0} in Section in the Appendix\footnote{ The figures come from the appendix of my Bachelor's thesis, they were made using Lilypond.} shows the first 11 overtones of the tone C on the piano\footnote{This paragraph is only for musical illustration, all the new notions it contains will be explicitely defined later.}. Now that the 7th and 11th overtones are not members of the C major scale, they will have specific roles at \emph{modulations}, i.e. changings between musical keys with different bases%\footnote{ Note that these overtones are out of tune in equal temperament. The frequency of the seventh overtone of C is 7 times as many as the frequency of C, while the frequency of its enharmonic equivalent on the equal-tempered piano, $b\flat^1$, is $2^{2 \frac{10}{12}}=7,1272$ times as many as the one of C. Similarly, the frequency of the 11th overtone of C is 11 times as many as the one of C, while the frequency of its enharmonic equivalent, $f\sharp^2$, is $2^{3  \frac{6}{12}}=8\sqrt{2}=11,3137$ times as many as the one of C. Therefore if we started from C on the equal-tempered piano, and used these actual overtones for modulating to G, we would get a G significantly different from G of the equal-tempered scale.}
%. Now we give the basic definitions for these relations.
The following definitions will be used in Section \ref{tonikus}, where we define musical functions and tonality. 
Let $X$ be a tone and $Y$ its 3rd overtone. The \emph{leading tone} of $[Y]$ is the octave equivalence class of $X$'s 11th overtone; the \emph{seventh tone belonging to} $[X]$, \label{seventhtone}also called the \emph{upper leading tone of $X$'s fifth overtone's equivalence class} is the octave equivalence class of the 7th overtone of $Y$. We also define these relations for the tones themselves: e.g. for $U \in [U]$ and $V \in [V]$, if $[U]$ is the leading tone of $[V]$, then we say that $U$ is the leading tone of $V$.

We define the \emph{perfect $X_1$ major scale} for a tone $X_1$. Generally, a \emph{seven-degree scale with base $X_1$} is a set of tones $\lbrace X_1, X_2,\ldots,X_7 \rbrace$ where $f(X_i) > f(X_j) \Leftrightarrow i>j$ and $f(X_7)<2f(X_1)$ (this is, every member of the scale is strictly less then one octave higher than the base). $X_i$ is called the $i$th degree scale tone of the scale. According to the Hungarian notation, we denote the degrees and the operations among them with the elements of the prime field $\mathbb{Z}_{7}$ , but we use the capital Roman numeral for the integer $(n \pmod 7) +1$ instead of $n \in \mathbb{Z}_7$. The \emph{perfect $X$ major scale} is a seven degree scale with base $X$, where the frequency ratios of the neighbouring degree tones are respectively:
$\frac{9}{8}, \frac{10}{9}, \frac{16}{15}, \frac{9}{8}, \frac{10}{9},\frac{9}{8}, \frac{16}{15}$. 
where the last ratio is the ratio of the VIIth degree scale tone and the second overtone of $X$.
For a perfect major scale, the following are approximately true, in the sense that the human ear cannot observe that they are false:
\begin{enumerate}[(i)]
\item the degree VII tone is the leading tone of the degree I, III is the one of IV,
\item IV is the upper leading tone of III, I is the one of VII,
\item the interval between I and IV is a perfect fourth, the one between I and V is a perfect fifth,
\item IV is the seventh tone belonging to I, and I the one belonging to V.
\end{enumerate}
The sum of \emph{twelve perfect fifths} starting from a tone $X$ results a tone with frequency $\frac{531441}{4096} f(X)$, while the sum of \emph{seven perfect octaves} gives a tone with frequency $128~f(X)$. The difference of these two tones is noticeable by an average person. However, if this deviation is equally spread along the whole interval, it locally cannot be perceived. Therefore one aims to fease the concept of \emph{the circle of fifths}, i.e., 12 quasi-fifths equal to 7 octaves on a musical instrument each tone of which is a base of a seven-degree scale perceptually equivalent to a perfect major scale. The leading tone and upper leading tone/seventh tone connections between the quasi-perfect major scales could be used to make it possible to move from each major scale to the two with a base one quasi-perfect fifth higher respectively lower.
This is the idea of the \emph{equal-tempered piano}.
\begin{definition}
A countable set $K$ of tones is an \emph{equal-tempered piano} if
\begin{enumerate}[(i)]
\item $A \in K$, where $A$ is the normal $A4$ tone with frequency $440~\mathrm{Hz}$,
\item $K$ has two tones $X$ and $Y$ the interval of which is at least 7 octaves,
\item If a tone $X$ is an element of $K$, then $f(X)=(\sqrt[12]{2})^n~f(A)$ for some $n \in \mathbb Z$. Further, if $f(X)=(\sqrt[12]{2})^n~f(A)$ for some $n \in \mathbb Z$ and $\exists Y, Z \in K$ such that $f(Y) < f(X)<f(Z)$, then $X \in K$.
\end{enumerate}
\end{definition}
According to this definition, $A$ is the element of every -- finite or infinite -- equal-tempered piano. Thus, the octave equivalence classes of the piano's white keys ($A,B,\dotsc,G$) can be defined. Using a \emph{well-tempered}, i.e., approximately equal-tempered piano that already actualized the circle of fifths, J.S. Bach showed that every tone of the equal-tempered piano can serve as a base of a quasi-perfect major scale, by composing his \emph{Das wohltemperierte Klavier}, which contains one piece written in each major key of his well-tempered piano.

\emph{Enharmonic equivalence} in the context of the 12-tone equal tempered scale means that two tones $Y,Z$ originate from two different perfect major scales, but there is a tone $X$ on the equal tempered piano from which neither $Y$ nor $Z$ is significally different for the human ear. Enharmonic equivalence depends of the listener's own hearing and cultural background; here we follow the classicist European convention. Non-audible tones are called enharmonic if they have audible octave-equivalents that are enharmonic. The corresponding octave equivalence classes are also called enharmonic. If the tones $A$ and $B$ are enharmonic, we write $A \sim B$. It is easy to see that $\sim$ is an equivalence relation. Further,
\begin{enumerate}
\item The interval of two neighbouring tones of the equal-tempered piano is called \emph{semitone}, the sum of two semitones (the distance of second neighbours) a \emph{wholetone}. The sequence of tones 0, 2, 4, 5, 7, 9 and 11 semitones higher than an \emph{arbitrary} piano tone is enharmonic to a perfect major scale.
\item If $X$ and $Y$ are two piano tones where $Y$ is a wholetone higher than $X$, then the (only) piano tone $Z$ such that $f(X)<f(Z)<f(Y)$ is the leading tone of $Y$ and the upper leading tone of $X$, up to enharmonic equivalence.
\end{enumerate}
The C major scale on the equal-tempered piano consists of the seven white keys. Moving \emph{stepwise upwards} in the circle of fifths, one reaches the G, D, A, E, B major scales consecutively. At each step, one new tone appears in the scale, this is VIIth degree tone of the new scale. We denote this new tone by X$\sharp$, where X is the element of the C major scale which has been replaced by the new one semitone higher tone. This way the following tones appear consecutively: F$\sharp$, C$\sharp$, G$\sharp$, D$\sharp$, A$\sharp$, the leading tones to G, D, A, E, B respectively. After B, the next fifth step upwards leads to F$\sharp$. Now, starting from the C major scale again and move \emph{stepwise downwards} fifth by fifth, we reach the F major scale first, which has exactly one scale tone that is outside the C major scale: instead of B, a one semitone lower tone occurs: the seventh tone with respect to F. Let X$\flat$ denote the one semitone lower piano tone than the white key X, then moving downwards in the circle of fifths, we reach F, B$\flat$, E$\flat$, A$\flat$, D$\flat$ and G$\flat$ consecutively. $\sharp$ and $\flat$ marks can be multiplied. By construction, we have $\flat\sharp=\sharp\flat=\natural$ means the identity of the C major scale, and ``multiplication" of $\flat$'s and $\sharp$'s is commutative.

Note that G$\flat$ and F$\sharp$ refer to the same (black) piano keys, these two tones are enharmonic, also D$\flat$ is enharmonic to C$\sharp$ etc. However, they are the same only under equal temperament: if we build a \emph{perfect} A major scale, F$\sharp$ is the VI degree scale tone there, while G$\flat$ is reached if we move downwards in the D$\flat$ major scale by 8 \emph{perfect} fifth steps, and take the IV degree scale tone. It is a well-known experimental result that these actual G$\flat$ and F$\sharp$ differ significantly.

From this point, all major scales will be situated on an equal-tempered piano, with all degrees derivable from the C major scale with finitely many ---in practice, usually 0, 1 or 2 --- $\flat$s or $\sharp$s. The fifth-by-fifth sequence of sharpened scale tones of a major scale on the equal-tempered piano (F$\sharp$, C$\sharp$, $\ldots$) or the sequence of flattened scale tones of the major scale (B$\flat$, E$\flat$, $\ldots$) is called the major scale's \emph{key signature}. Having established these scales on the piano, the traditional notation of musical intervals among their degrees can be established, see e.g. \cite[Appendix E]{benson}. Also one can define \emph{consonance} and \emph{dissonance} of these intervals, cf. \cite[Chapter 4]{benson}.

%After this, if $X$ and $Y$ are arbitrary ---not by all means audible--- tones of the equal-tempered piano and there is an $n \in \mathbb{Z}$ such that the audible tone $X'$ $n$ octaves higher than $X$ is enharmonic to the (also audible) $Y'$, which is $n$ octaves higher than $Y$, then we call $X$ and $Y$ audible, too. One can also define enharmonic equivalence of seven-degree scales, octave equivalence classes, intervals of an equal-tempered piano $K$ and elements of its Cartesian product powers $K^n$ ($n \in \mathbb{N}$) too. 
By definition, an equal-tempered piano has to be \emph{at least as wide as a real piano}, in order to make it possible that the piano covers 7 octaves ($\approx$ 12 fifths). Let $d_2(X,Y)$ denote the interval of the notes $X$ and $Y$ of \emph{any} equal-tempered piano $K$, measured in semitones. The construction of equal temperament implies the following, also if the equal-tempered piano is infinite.
\begin{proposition}
$(K,d_2)$ is a metric space and $d_2$ generates the discrete topology. 
\end{proposition}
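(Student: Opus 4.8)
The plan is to pull $d_2$ back to the integers and then read off both assertions from elementary properties of the absolute value. First I would make $d_2$ explicit. Since a semitone is the interval of two neighbouring piano tones and an octave comprises twelve semitones while corresponding to a doubling of frequency, the interval of $X$ and $Y$ measured in semitones is $d_2(X,Y) = 12\,\lvert \log_2 f(X) - \log_2 f(Y)\rvert$. By the defining property of $K$, every $X \in K$ satisfies $f(X) = 2^{n(X)/12}\,f(A)$ for a unique $n(X) \in \mathbb Z$, and since a tone is determined by its frequency, the map $X \mapsto n(X)$ is injective on $K$. Substituting yields the key identity
\begin{equation*}
d_2(X,Y) = \lvert n(X) - n(Y)\rvert \in \mathbb N,
\end{equation*}
so that $d_2$ is exactly the pullback along $X \mapsto n(X)$ of the standard metric $\lvert m-n\rvert$ on $\mathbb Z$, and in particular takes only non-negative integer values.

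With this identity in hand the metric axioms follow at once. Non-negativity and symmetry are immediate; the identity of indiscernibles holds because $d_2(X,Y)=0$ forces $n(X)=n(Y)$, hence $f(X)=f(Y)$, hence $X=Y$ by injectivity; and the triangle inequality
\begin{equation*}
d_2(X,Z) = \lvert n(X)-n(Z)\rvert \le \lvert n(X)-n(Y)\rvert + \lvert n(Y)-n(Z)\rvert = d_2(X,Y)+d_2(Y,Z)
\end{equation*}
is just the triangle inequality for $\lvert\cdot\rvert$ on $\mathbb Z$. Thus $(K,d_2)$ is a metric space, and nothing in this reasoning depends on whether $K$ is finite or infinite.

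For the topology I would exploit that $d_2$ is integer-valued. Fixing any $X \in K$, for $Y \in K$ we have $d_2(X,Y) < 1$ if and only if $\lvert n(X)-n(Y)\rvert = 0$, i.e. $Y=X$; hence $B_1(X) = \{X\}$. Every singleton is therefore open, so every subset of $K$ is open and $d_2$ generates the discrete topology.

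I expect no serious obstacle, as the content lies entirely in setting up the identity $d_2(X,Y)=\lvert n(X)-n(Y)\rvert$. The one point deserving care is the justification that the frequency determines the tone, so that $X \mapsto n(X)$ is injective and the identity of indiscernibles is available, together with the translation of ``interval measured in semitones'' into $12\,\lvert\log_2(f(X)/f(Y))\rvert$; both are secured by the conventions fixed earlier, namely that a tone may be represented by its frequency and that an octave equals twelve semitones.
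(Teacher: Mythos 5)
Your proof is correct and is precisely the argument the paper leaves implicit: the text offers no proof beyond the remark that ``the construction of equal temperament implies'' the proposition, and your identity $d_2(X,Y)=\lvert n(X)-n(Y)\rvert$, obtained from $f(X)=2^{n(X)/12}f(A)$, is exactly what that remark is gesturing at. The metric axioms via the pullback of $\lvert\cdot\rvert$ along the injective map $X\mapsto n(X)$ and the observation $B_1(X)=\{X\}$ for the discreteness are both sound and need no further justification.
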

Thus, if $K$ is an equal-tempered piano, the Cartesian product $K^n$ is also equipped with the discrete topology. The elements of $K^n$ are called \emph{chords}.
%According to the construction of the twelve enharmonic equivalence classes of major scales, we can define all the musical intervals that are in use in classical harmony. We follow the usual international terminology (i.e. diminished/minor/perfect/major/augmented prime/first/second/$\ldots$/seventh/octave), therefore we omit these definitions. We describe \emph{consonance and dissonance} of intervals in Section \ref{kondis} in the Appendix.
%\begin{proof}
%It is trivial that $d_2$ has the metric properties. Because there is no smaller $d_2$ distance between piano elements than 1, for all $X \in K~B_1(X)$ we have $\lbrace X \rbrace$, which shows that the one-point set $\lbrace X \rbrace$ is open, therefore $K$ is a discrete topological space. 
%\end{proof}
Hence, we can speak about \emph{Borel-measurable functions} $M:~\mathbb{R}_0^+ \to K^n$, which we call \emph{$n$-part pieces}. The \emph{$k$th voice} of $M$ is $\mathrm{pr}_k \circ M$, where $\mathrm{pr}_k$ is the projection to the $k$th instance of the equal-tempered piano. We are interested in the \emph{four-part case}; we define the special, homophonic four-part pieces in Section \ref{top}. There the voices (in increasing order of their numbers) are called, as conventionally, \emph{bass, tenor, alto} and \emph{soprano}. In the simplest models of classical harmony, each chord appearing in a homophonic four-part piece has to be a \emph{triad} or a \emph{seventh chord} in an correct four-part form. In Bach's four-part chorales, this is not true any more, but most of the chorales can be obtained from a piece consisting of such chords via applying a finite set of local modifications, the so-called \emph{figurations}, see Definition~\ref{koralka} bel 

\emph{Triad names} are special elements of the factor space $K^3 / \equiv$ on an arbitrary equal-tempered piano $K$, where $\equiv$ is the octave equivalence relation. These contain scale tones or once altered tones from a certain seven-degree scale on $K$, and their main characteristic is that they consist of a $k$th, a $k+2$nd and a $k+4$th degree tone$\!\pmod 7$ of the given major scale based on one of the twelve enharmonic equivalence classes of the equal-tempered piano. With this notation, we say that the triad is of degree $k$. There are four kinds of triad names for which we say that they \emph{comply with classical harmony}, according to Table \ref{egykém}.

A \emph{four-part version of a triad} -- later in this article, simply: a \emph{triad} -- is an element of the piano power $K^4$, which consists of the tones of a triad name, exactly one of them in two voices. %Triads with the same name are considered to be equal if and only if all of their voices are the same (and not only octave-equivalent!). 
If the triad consists of the $k$th, $k+2$nd and $k+4$th degree scale tone of a seven-degree scale -- these are called the \emph{base}, the \emph{third} and the \emph{fifth} of the triad, respectively -- on the equal-tempered piano, its \emph{position} is determined by which tone it has in the bass. If in the bass there is the $k$ degree tone, where $k$ refers to the corresponding Roman numeral as before, then the triad is in \emph{root position} (German--Hungarian notation of the triad: $k$), if the $k+2$ degree tone is in the bass, then the triad is in \emph{first inversion} (notation: $k^6$), and if the $k+4$ degree tone, then in \emph{second inversion} (notation: $k_4^6$). (We note that in some models of classical harmony, triads without fifths are allowed, in such cases the condition that exactly one tone appears in two voices is not satisfied; either the base appears in three voices or both the base and the third are doubled. We will not take this into account in the rest of the present paper.)

\begin{table} \caption{Triads (above) and seventh chords (below)}\label{egykém} \begin{footnotesize} \begin{tabular}{|ll|ccc|}
\hline
Name & Notation & $k\leftrightarrow k+2$ interval & $k+2\leftrightarrow k+4$ i. & $k+4\leftrightarrow k$ i. \\ \hline
Major triad & M &\emph{major third} & minor third & perfect fifth \\ \hline
Minor triad & m & \emph{minor third} & major third & perfect fifth \\ \hline
Diminished triad & d & minor third & minor third & \emph{diminished fifth} \\ \hline
Augmented triad & A & major third & major third & \emph{augmented fifth} \\ \hline
\end{tabular}

\vspace{3pt}
\begin{tabular}{|l|l|l|l|l|l|l|}
\hline
Name: & Third & Fifth & Seventh & Partial triads & Examples & Example \\
($\ldots$) \emph{seventh} & & & & &  in major & in minor \\ \hline
augmented major &major & augmented & major & major, augm. &  none & III \\ \hline
major minor &major & perfect & major & major, minor &  I, IV & VI \\ \hline
major/dominant & major & perfect & minor & major, dimin. &  V & V \\ \hline
harmonic minor & minor & perfect & major & minor, augm. & none & I \\ \hline
minor major & minor & perfect & minor & minor, major & II, III, VI & IV \\ \hline
semi-diminished & minor & diminished & minor & dimin., minor &  VII & II \\ \hline
diminished & minor & diminished & diminished & dimin., dimin. & none & VII \\ \hline  
\end{tabular}
\end{footnotesize}
 \end{table}
%There exists a major and a diminished triad in the set of the first 7 overtones of an arbitrary tone: the 4th, 5th and 6th overtones form a major one and the 5th, 6th and 7th overtones form a diminished one. Minor and augmented triads cannot be found among the triplets of these tones. On the other hand, major and minor are the consonant triad types---because all the three intervals among their tones are consonant---while diminished and augmented are dissonant---because their fifths are not perfect and therefore dissonant.
%If the name of the triad complies with classical harmony, the duplication of this triad name's voices is appropriate\footnote{ In more detail: if the triad is in root position, the $k$th degree tone participates in two voices, and if the triad is in second inversion, then the $k+4$th degree. Furthermore, if the triad is diminished and it is in root position, then the $k+2$nd degree tone is not in the tenor and the $k+4$th degree tone is not in the alto (in order to avoid too strong dissonances). There is no limitation for voice duplication in the case of first inversion, apart from some special cases.} and there is a preliminarily fixed frequency interval (of some musical instruments or vocal voices) for each voice and the contributing voice of the triad is inside this, then we say that the triad \emph{complies with classical harmony}. \label{valamike} The previous sentence is true with a little correction, which we will give after defining tonality topologically, on page \pageref{mostmarkesz}.

Consider the union of a degree $k$ and a degree $k+2$ triad name on an arbitrary seven-degree scale. This is indeed an element of $K^4 / \equiv$, and it is called a \emph{seventh chord name}. If $H \in K^4$ consists of the tones of a seventh chord name in any permutation of the voices, then $H$ is called a \emph{seventh chord}. This name comes from the fact that there is a seventh interval between the $k$ and the $k+6$ degree scale tones. The degree $k$ and degree $k+2$ triads are the \emph{partial triads} of the seventh chord. The position of a degree $k$ seventh chord inverson can be: \emph{(root position) seventh chord} (German--Hungarian notation: $k^7$), \emph{first inversion} ($k_5^6$), \emph{second inversion} ($k_3^4$) and \emph{third inversion} ($k^2$), if in the bass there is the $k$th, $k+2$nd, $k+4$th and $k+6$th degree tone of the seventh chord name, respectively. For the origin of these notations, we refer to \cite[Book I., Section II.11]{kesztler}. %The origin of the notations in brackets is the following. If a 7th chord does not have the $k$th degree tone in the bass and the $k+6$th degree tone in the soprano, then there are two voices between which the interval is a second. In other words: between these two voices there is a \emph{second friction}. The upper and lower indices on the right of the Roman numeral of the degree indicate the interval of these two voices from the bass, up to octave equivalence. 

The $k$th, $k+2$nd, $k+4$th and $k+6$th degree tones of a seventh chord are called \emph{base, third, fifth} and \emph{seventh} respectively. If both partial triads of a seventh chord name $H$ comply with classical harmony, and there is no triad which is voicewise enharmonic to $H$, then we say that $H$ \emph{complies with classical harmony}. This second assumption is taken for excluding the \emph{augmented triad} (see Table \ref{egykém}) from the set of seventh chord names, which can be represented as a seventh chord but is indeed just a triad. %by a triad name in which there is a major third between the $k$th and $k+2$nd, between the $k+2$nd and $k+4$th and also between the $k+4$th and the $k+6$th degree tone, but this time the $k$th and $k+6$th degree tone are enharmonic on the equal-tempered piano. If a four-part version $H_0$ of a seventh chord $H$ complying with classical harmony has it voices inside a preliminarily fixed frequency interval, then, similarly to the case of triads, we say that $H_0$ complies with classical harmony. The following table shows the seventh chord (name) types those comply with classical harmony. (Definitions of the major and the minor keys will follow in the next section, the examples for seventh chord types in the last two columns are only for musical illustration.)
As a remark, we note that a root position \emph{dominant seventh} (see Table 1) that complies with classical harmony may be \emph{fifth deficient}, which means that it need not contain its fifth in any voice but instead the base in two voices (one of these voices is necessarily the bass). 

Table \ref{egykém} also shows the seventh chord types, with examples consisting of scale tones of the major and the minor key (see Section \ref{hangnem}). For four-part versions of triads and seventh chords, we say that they \emph{comply with classical harmony} if their name complies with classical harmony, in each of their voices the pitches (frequencies) accord to the conventional pitch interval associated with the instrument or singer that presents the voice, and the duplication of tones is correct. Questions about appropriateness of pitches in the four voices are quite delicate and they require music-historical research. From a model point of view, they can be ignored by just considering pieces on an equal-tempered piano that is infinite towards both directions. Classical tone duplication rules are e.g. that if a triad is in root position, then the only tone of the triad that may appear in two voices is the base etc. It is also required that all tones of a triad or seventh chord that are not scale tones of the scale corresponding to the chord may only appear in one voice. For detailed duplication rules, we refer to \cite[p.~30--183.]{kesztler}.

\section{Trichotomy of musical keys} \label{hangnem}
After introducing the basic musical notions, we define keys based on the idea of key stability and functional tonality in classical harmony. We preliminarily ensure that our definition accepts the major and the minor keys, which have been used in Europe for five centuries, to be keys. Furthermore, our key definition gives us the possibility to find all possible key types. Our Proposition \ref{hangnemtyű} shows that apart from major and minor there is exactly one more type.
\begin{definition} \label{hangnemke}
Let $H$ be a seven-degree scale on the equal-tempered piano (consisting of scale tones and altered tones from the C major scale), with seven pairwise non-enharmonic scale tones. We say that $H$ is a \emph{key} if:
\begin{enumerate}[(i)]
\item the Vth degree seventh chord of $H$ is dominant,
\item all triad and seventh chord names that consist of the scale tones of $H$ comply with classical harmony,
\item if the $k$th degree seventh is dominant, then the degree $k+3 \mod 7$ triad is major or minor$\pmod 7$, with the $k+3$th degree scale tone one perfect fourth higher than the $k$th degree one. 
\end{enumerate}
\end{definition}
The condition $(iii)$ means that the all dominant sevenths can \emph{resolve to their tonic}, see Section \ref{tonikus}. It follows that Definition \ref{hangnemke} implies the next two properties:
\begin{proposition}
In a key the first degree triad is major or minor, and the VIIth degree scale tone is the leading tone of the Ist degree scale tone.
\end{proposition}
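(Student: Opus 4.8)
The plan is to reduce both assertions to elementary semitone counts in the metric space $(K,d_2)$, invoking the interval data of Table~\ref{egykém} together with conditions (i) and (iii) of Definition~\ref{hangnemke}. Throughout I identify the degrees with $\mathbb{Z}_7$ and recall the convention that degree V corresponds to $n=4$ and degree I to $n=0$.

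For the first claim, I would apply condition (iii) to the Vth degree seventh chord. By condition (i) this chord is dominant, so the hypothesis of (iii) is met with $k=4$. Then $k+3 = 7 \equiv 0 \pmod 7$, whose Roman numeral is I; hence condition (iii) asserts directly that the degree I triad is major or minor. This settles the first half with essentially no computation, and as a by-product it records that the Ist degree scale tone lies one perfect fourth, i.e. $5$ semitones, above the Vth degree one -- necessarily the octave copy of the base, since the base is the lowest scale tone.

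For the second claim, I would extract the internal interval structure of the dominant seventh. By condition (i) and Table~\ref{egykém}, the interval from the base (degree V) to the third (degree VII) of the Vth degree seventh is a major third, so $d_2(\mathrm{V},\mathrm{VII}) = 4$ semitones. Writing $\mathrm{I}'$ for the octave copy of the base lying above V, condition (iii) gives $d_2(\mathrm{V},\mathrm{I}') = 5$. Since all piano tones are collinear in the logarithmic frequency scale and $\mathrm{V},\mathrm{VII},\mathrm{I}'$ occur in increasing pitch ($4<5$), the distances subtract: $d_2(\mathrm{VII},\mathrm{I}') = 5-4 = 1$. Thus VII sits exactly one semitone below $\mathrm{I}'$, so by the semitone characterisation of the leading tone (the second remark following the definition of enharmonic equivalence) VII is the leading tone of $\mathrm{I}'$ up to enharmonic equivalence. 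As $\mathrm{I}'$ is octave-equivalent to the Ist degree scale tone and the leading-tone relation is defined on octave-equivalence classes, VII is the leading tone of I.

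The main obstacle is not any single computation but the bookkeeping of octaves and of degree arithmetic modulo $7$: one must be careful that the ``perfect fourth higher'' of condition (iii) refers to the octave copy $\mathrm{I}'$ of the base rather than the base itself (the base being the lowest, not the highest, scale tone), and that the leading-tone relation is transported correctly between a tone and its octave class. A secondary point of care is to use the semitone-distance characterisation of the leading tone rather than the overtone-based definition, since on the equal-tempered piano the two agree only up to enharmonic equivalence -- which is precisely the sense in which the statement is to be read.
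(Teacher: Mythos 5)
Your proposal is correct and matches the argument the paper leaves implicit: the proposition is stated without proof, but the paper's own proof of the Minor Lemma derives the interval sequence $(0,2,?,5,7,?,11)$ from exactly the same data (condition (i) forcing the Vth degree seventh to be dominant, condition (iii) with $k=\mathrm{V}$ placing degree I a perfect fourth above V), and the entry $11$ for degree VII is precisely your semitone computation $d_2(\mathrm{VII},\mathrm{I}')=5-4=1$. Your explicit handling of the octave copy $\mathrm{I}'$ and of the enharmonic/semitone characterisation of the leading tone fills in the bookkeeping the paper skips.
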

The resolution of the Vth degree seventh (or triad) to the Ist degree triad is the key ingredient of the stability of the key, as detailed in Section \ref{tonikus}.
\begin{proof}
Parts (i) and (iii) of Definition \ref{hangnemke} imply that the Ist degree triad is major or minor, moreover that the Vth degree seventh chord must resolve to the Ist degree triad. Hence, the Vth degree scale tone is 7 semitones higher than the Ist degree one. Therefore, the VIIth degree scale tone, which is the third of the Vth degree triad, is one semitone lower than the Ist degree scale tone. This implies the second part of the claim.
\end{proof}
The next lemma is a key observation of this section.
%A possible way to find keys is to consider \emph{modal scales}.
%Consider a major scale and duplicate it one octave higher. This way one gets a sequence of 15 tones -- 0, 2, 4, 5, 7, 9, 11, 12, 14, 16, 17, 19, 21 and 23 semitones higher than the lowest one. The seven-degree scales of seven neighbouring elements (in frequency's order) are called \emph{modal scales}.
%It is easy to see that among these only the major scale is a key. Changing the VIIth degree tone of the modal scale to the leading tone of the Ith degree tone yields the \emph{harmonic modal scales}. Show that among these only one is a key, which has the following intervals between its first degree and the other degrees: 0, 2, 3, 5, 7, 8, 11 and 12 semitones, respectively. This scale is called \emph{harmonic Aeolian} or \emph{harmonic minor}, this is the harmonic version of the Aeolian or natural minor scale. According to these results, we can introduce the major and minor keys. The next lemma helps us find the third key type and prove that there are no other possible types.
\begin{lemma}[The Minor Lemma] \label{molllemma}
In any key $H$ the following are equivalent:
\begin{enumerate}[(i)]
\item the VIth degree scale tone is 8 semitones higher than the Ist degree one,
\item all types of seventh chords from Table \ref{egykém} can be built from scale tones of $H$,
\item the VIIth degree seventh chord (built from scale tones) is diminished.
\end{enumerate}
\end{lemma}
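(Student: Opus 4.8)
\medskip
\noindent\textbf{Proof proposal.}
The plan is to coordinatize the scale of $H$ and show that the three conditions collapse to a single numerical constraint on one degree. Write the degrees of $H$ as reals $0 = a_1 < a_2 < \dots < a_7 < 12$, where $a_i$ is the semitone distance of the $i$th degree above the base, and set $t_i := a_{i+2} - a_i$ for the consecutive ``step-third'' between degrees $i$ and $i+2$, with the cyclic convention $a_{i+7} = a_i + 12$ and indices read mod $7$. Then the degree-$k$ triad and seventh chord are encoded by $(t_k,t_{k+2})$ and $(t_k,t_{k+2},t_{k+4})$ respectively; a triad complies exactly when both entries lie in $\{3,4\}$ (minor/major third), and the four triad types and seven seventh-chord types of Table~\ref{egykém} are precisely the patterns in $\{3,4\}^2$ and in $\{3,4\}^3\setminus\{(4,4,4)\}$, the excluded $(4,4,4)$ being the augmented triad ruled out by the ``no voicewise enharmonic triad'' clause.

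First I would normalize $H$ using Definition~\ref{hangnemke} and the preceding Proposition. Axiom~(i) forces the degree-V seventh to be dominant, i.e.\ $(t_5,t_7,t_2) = (4,3,3)$, which together with the Proposition's leading-tone condition $a_7 = 11$ pins down $a_5 = 7$, $a_2 = 2$, $a_4 = 5$; in particular $t_2 = t_7 = 3$ are fixed. Axiom~(ii) (all triads comply) then confines the two remaining degrees: the degree-I triad being major or minor gives $t_1 = a_3 \in \{3,4\}$, and the degree-IV triad being of type $(t_4,t_6) = (a_6-5,\,12-a_6)$ gives $t_4 = a_6 - 5 \in \{3,4\}$, i.e.\ $a_6 \in \{8,9\}$. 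Thus every key is described by the two binary parameters $t_1,t_4 \in \{3,4\}$ (consistently, $\sum_i t_i = 24$ forces exactly three major thirds among the seven steps).

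With this normalization the equivalence $(i)\Leftrightarrow(iii)$ is immediate: the degree-VII seventh chord has step-thirds $(t_7,t_2,t_4) = (3,3,t_4)$, so it is diminished iff $t_4 = 3$, i.e.\ iff $a_6 = 8$, which is exactly~(i). For the link with~(ii), I would observe that the seven seventh chords of $H$ are exactly the seven cyclic windows of length $3$ of the binary word $W = (t_1,t_3,t_5,t_7,t_2,t_4,t_6) = (t_1,\,7-t_1,\,4,\,3,\,3,\,t_4,\,7-t_4)$, so that~(ii) asserts that these windows realize all seven admissible patterns of $\{3,4\}^3\setminus\{(4,4,4)\}$. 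The implication $(ii)\Rightarrow(iii)$ then follows because the diminished pattern $(3,3,3)$ must appear as three consecutive $3$'s in $W$, and given the fixed letters $4,3,3$ in positions $3$--$5$ this is possible only in the window based at the degree-VII chord, which forces $t_4 = 3$. Conversely, for $(i)\Rightarrow(ii)$ I would fix $t_4 = 3$ and list the seven windows in the two cases $t_1 = 3$ and $t_1 = 4$, checking in each that they are pairwise distinct and avoid $(4,4,4)$, hence exhaust all seven types.

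The main obstacle is the normalization step rather than the final enumeration: one must verify carefully that Definition~\ref{hangnemke}(i)--(ii) and the Proposition genuinely leave only the two binary degrees of freedom $t_1,t_4$---in particular that triad and seventh-chord compliance, read off from Table~\ref{egykém}, excludes every value of $a_3,a_6$ outside $\{3,4\}$ and $\{8,9\}$ and correctly discards the degenerate $(4,4,4)$. Once the parameter space is reduced to these four words $W$, each of the three conditions becomes a one-line check, and they are simultaneously true exactly when $t_4 = 3$.
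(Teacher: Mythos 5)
Your proof is correct and follows essentially the same route as the paper's: both reduce the key, via Definition~\ref{hangnemke} and the preceding proposition, to the interval sequence $(0,2,?,5,7,?,11)$ with the two free degrees confined to $a_3\in\{3,4\}$ and $a_6\in\{8,9\}$, and then observe that each of (i)--(iii) is equivalent to $a_6=8$. The only difference is one of detail: the paper declares the final equivalences ``easy to see,'' whereas your cyclic-window enumeration of the seven seventh chords as length-$3$ windows of the word $(t_1,7-t_1,4,3,3,t_4,7-t_4)$ actually carries out that verification explicitly.
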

\begin{proof}
The definition of key implies that the sequence of intervals of the first degree scale tone and the other scale tones is: $(0,2,?,5,7,?,11)$ semitones. The ?'s refer to unknown intervals. It is easy to see that the conditions of the lemma are equivalent to the condition that the sequence of intervals is $(0,2,?,5,7,\mathbf{8},11)$. The remaining ? stands for either 3 or 4, in order to satisfy the definition of key.
\end{proof}
\begin{proposition}[The trichotomy of keys] \label{hangnemtyű}
Let $X$ be an enharmonic equivalence class on an equal-tempered piano $K$. Then there are exactly three keys with first degree $X$, up to enharmonic equivalence. These are the major, the minor and the harmonic major (named by Rimsky-Korsakov in \cite{rk}) keys, with interval sequences $(0,2,4,5,7,9,11)$, $(0,2,3,5,7,8,11)$ and $(0,2,4,5,7,8,11)$, respectively. The latter two ones are the ones that satisfy the Minor Lemma.
\end{proposition}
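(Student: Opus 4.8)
The plan is to turn the statement into a short finite case analysis built on the skeleton already produced in the proof of Lemma~\ref{molllemma}. That proof shows that every key with first degree $X$ has interval sequence $(0,2,?,5,7,?,11)$ measured in semitones above the base, and that the first $?$ (the third degree) equals $3$ or $4$. So it only remains to determine the sixth degree and then decide which of the resulting finitely many sequences actually satisfy Definition~\ref{hangnemke}. Since the seven scale tones are strictly increasing and pairwise non-enharmonic, and the fifth degree is $7$ while the seventh is $11$, the sixth degree must lie in $\{8,9,10\}$.

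First I would discard the value $VI=10$. The degree~VI triad consists of the degrees VI, I and III (its base, third and fifth), so its base-to-third interval is that of the sixth degree and the octave-raised first degree, namely $12-10=2$ semitones. A wholetone is neither a major nor a minor third, hence this triad matches none of the four names in Table~\ref{egykém} and condition~(ii) of Definition~\ref{hangnemke} fails. This leaves $VI\in\{8,9\}$, so exactly four candidate sequences survive, indexed by $(III,VI)\in\{3,4\}\times\{8,9\}$.

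Next I would eliminate the candidate $(III,VI)=(3,9)$, that is $(0,2,3,5,7,9,11)$. Here the degree~IV seventh chord is built from the degrees at $5,9,12,15$ semitones, giving a major third, a perfect fifth and a minor seventh above its base: it is a dominant seventh. Condition~(iii) of Definition~\ref{hangnemke} applied with $k=\mathrm{IV}$ then requires the degree~VII scale tone to stand a perfect fourth ($5$ semitones) above the degree~IV one; but $11-5=6$ semitones is a tritone, so (iii) is violated and this sequence is not a key. The three remaining candidates are precisely $(0,2,3,5,7,8,11)$, $(0,2,4,5,7,8,11)$ and $(0,2,4,5,7,9,11)$.

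It then remains to confirm that each of these three sequences really is a key, by checking conditions~(i)--(iii) directly: one lists the seven triads and seven seventh chords of the scale, verifies that every one of them occurs in Table~\ref{egykém} (condition~(ii)), that the degree~V seventh is dominant (condition~(i)), and that each dominant seventh present resolves correctly (condition~(iii)). I expect the verification of~(iii) to be the main obstacle, since it requires locating \emph{all} dominant sevenths of each scale and testing their resolution targets; the decisive point is that, in contrast to the rejected $(0,2,3,5,7,9,11)$, none of the three survivors contains a dominant seventh whose would-be tonic lies a tritone away, so the only dominant seventh is the one on V, resolving up a genuine perfect fourth to a major or minor degree~I triad. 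Finally I would identify the three sequences with the major, the minor and the harmonic major scales, and observe that the latter two have sixth degree $8$ and hence are exactly those satisfying Lemma~\ref{molllemma}.
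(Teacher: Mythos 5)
Your proposal is correct and follows essentially the same route as the paper: start from the skeleton $(0,2,?,5,7,?,11)$ forced by the key definition, and reject the candidate $(0,2,3,5,7,9,11)$ because its degree~IV seventh chord is dominant yet its resolution target lies a tritone rather than a perfect fourth away, violating condition~(iii). Your write-up is in fact more complete than the paper's, which never explicitly excludes $\mathrm{VI}=10$ nor verifies the three surviving sequences, leaving the $\mathrm{VI}=8$ branch entirely to the Minor Lemma.
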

\begin{proof}
As in the proof of the Minor Lemma, the key's definition implies that the interval sequence of an arbitrary key's scale is $(0,2,?,5,7,?,11)$. If the degree VI scale tone has sign $9$, then the requirement that every triad and seventh chord built up from scale tones has to comply with classical harmony implies that the sign of the IIIrd degree tone is either $3$ or $4$. If this sign is $4$, then the scale is the $X$-major scale. If the sign is $3$, then the IVth degree seventh chord built up from scale tones is a dominant seventh, but the interval between the IVth and the VIIth degree scale tone is not a perfect fourth but a tritone (enharmonic with 6 semitones/3 wholetones). Therefore in this case we do not obtain a key.
\end{proof}

From the 1500s, European music is determined by the major--minor duality. The harmonic major key differs by only one scale tone (degree VI) from the major scale and also by only one scale tone (degree III) from the minor one, and therefore the listener automatically tries to perceive it as minor or major. This causes an instability of the harmonic major key in the context of European music history, which is however not an intrinsic instability of this key, since it satisfies the same stability conditions as the two other types of key, according to Definition \ref{hangnemke}.  %We have made a statistical experiment about this perceptual question. The participating music students accepted a given harmonic major piece to be rather minor than major, see . We note that J. S. Bach finished all of his four-part chorales in minor key with a major chord (the third of the chord $\mathrm{I}^\sharp$ is called \emph{Picardian third}), as by the properties of the overtone system, it is acoustically more favourable.

The \emph{key signature \label{kettke} of a harmonic minor or harmonic major key} is the key signature of the major scale which has the degree I scale tone of the minor key as its degree VI scale tone. These major and minor scales are called \emph{relative}.
\section{Topology of the homophonic four-part setting} \label{top}
In this section, we present a continuous time model of classical harmony, given by \emph{homophonic four-part pieces}. Our notions allow for some non-feasible musical phenomena, such as an infinite piece and chords accumulating in one point in time (referred to as \emph{packing point}). This way, one can also handle \emph{periodic pieces} without ending in finite time, which is often aimed in both classical (e.g. William Billings: \emph{The Continental Harmony}, 1794) and popular music. The continuous approach allows us to define the genre of \emph{Bach's chorale harmonizations} mathematically precisely, which is not possible if one uses only \emph{chord sequences} to describe homophonic four-part pieces. 
 
\begin{definition} \label{osszhangpelda}
Let $K$ be an equal-tempered piano. $M: \mathbb{R}_0^+ \to K^4$ be a four-part piece (see Section \ref{alja}). $M$ is called a \emph{homophonic four-part piece} if:
\begin{enumerate}
\item[(i)] Each element of $\mathrm{Ran}~M$ is a four-part version of a triad or a seventh chord (in some inversion) that complies with classical harmony, 
\item[(ii)] each voice of each element of $\mathrm{Ran}~M$ only contains tones that can be derived from the C major scale on $K$ using the system of $\flat$'s and $\sharp$'s,
\item[(iii)] for all $H \in \mathrm{Ran}~M$, we have that $M^{-1}(H)=\lbrace x \in \mathrm{Dom}~M \vert M(x)=H \rbrace$ is a disjoint union of intervals closed on the left and open on the right.
\end{enumerate} 
\end{definition}
\begin{definition}
If $M$ is a homophonic four-part piece, $B(M)$, the smallest (left-closed, right-open) interval that contains $\mathrm{Dom}~M$ is called the \emph{cover} of $M$.
\end{definition}
As we mentioned in the introduction, homophony means that if in a point in time one voice starts to play a new tone, then all other voices do so. It follows that if $M$ is a homophonic four-part piece, then if there is a pause at time $t \in B(M)$ in at least one voice of $M$ (i.e., $t \notin \mathrm{Dom}~pr_i \circ M$), then this is actually a \emph{general pause}, i.e. pause in all voices. Also, one can prove that the connected components of pauses are also intervals closed on the left and open on the right.

Using the point $(iii)$ of Definition \ref{osszhangpelda}, it is easy to see that any homophonic four-part piece $M$ is \emph{right-continuous}. %i.e.
%$
%\forall t_0 \in \mathrm{Dom}~M:~ \lim\limits_{t \to t_0+0} M(t)=M(t_0).
%$
According to the fact that $M$ takes values in a discrete space, this implies
$
\forall t_0 \in \mathrm{Dom}~M~\exists \delta>0: ~ \forall t \in \left[ t_0, t_0+\delta \right[ ~ M(t)=M(t_0).
$
Now we define some special points of homophonic four-part pieces.
\begin{definition}
Let $M$ be a homophonic four-part piece.
\begin{enumerate}
\item[(i)] $t=\inf~\mathrm{Dom}~M$ is the \emph{starting point} of $M$,
\item[(ii)] $t=\sup~\mathrm{Dom}~M$ is the \emph{endpoint} of $M$, 
\item[(iii)] $t \in \mathrm{Dom}~M$ is a \emph{chord-changing point} of $M$ if $\exists \varepsilon>0,~\exists H_1 \neq H_2 \in K^4$ such that $\forall x \in \left[ t-\varepsilon, t \right[~ M(x)=H_1$ and $\forall x \in \left[ t, t+\varepsilon \right[,~M(x)=H_2$. From now on, let $A(M)$ denote the set of the chord-changing points of $M$.
\end{enumerate}
\end{definition}
In the following, $\vee$ means logical ``or" and $\wedge$ means logical ``and".
\begin{definition}
A homophonic four-part piece $M$ has \emph{infimum of chord lengths} defined as  \[ \inf\limits_{t \in \mathcal{D}(M)} \sup \left\lbrace r_1+r_2 \vert r_1, r_2 \geq 0 ~ \wedge \forall x \in \left[ t-r_1, t+r_2 \right[: M(x)=M(t) \right\rbrace . \] % while the \emph{supremum of chord lengths} of $M$ is defined as $\sup\limits_{t \in \mathcal{D}(M)} \sup \left\lbrace r_1+r_2 \vert ~r_1, r_2 \geq 0 ~ \wedge ~ \forall x \in \left[ t-r_1, t+r_2 \right[: M(x)=M(t) \right\rbrace $.
\end{definition}
The proof of the next proposition is left for the reader.
\begin{proposition}
Let $M$ be a homophonic four-part piece. Then $ A(M)$ is countable. If the infimum of the chord lengths of $M$ is positive, then $A(M)$ has no accumulation point (i.e., $\overline{A(M)}$ consists only of isolated points), which implies that $A(M)$ is finite if $\mathrm{Dom}~M$ is bounded.
\end{proposition}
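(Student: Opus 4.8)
The plan is to reduce all three claims to a single structural observation about the \emph{maximal constancy intervals} of $M$, i.e.\ the connected components of the level sets $M^{-1}(H)$. By part $(iii)$ of Definition~\ref{osszhangpelda} together with right-continuity and the discreteness of $K^4$, each such component is a nonempty interval of the form $[a,b[$ on which $M$ is constant, and these components are pairwise disjoint and cover $\mathrm{Dom}~M$. First I would verify the key identification: for $t$ lying in the component $[a,b[$, any interval $[t-r_1,t+r_2[$ on which $M\equiv M(t)$ must itself be contained in $[a,b[$ (a connected subset of $M^{-1}(M(t))$ through $t$ lies in the component of $t$), so the supremum defining the chord length at $t$ equals $(t-a)+(b-t)=b-a$. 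Hence the chord length is constant on each component and equals that component's length, and the infimum of chord lengths equals the infimum of the component lengths.

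For countability, I would observe that every chord-changing point $t$ is precisely the left endpoint of its component: since $M\equiv H_1$ on $[t-\varepsilon,t[$ and $M\equiv H_2\neq H_1$ on $[t,t+\varepsilon[$, the value just to the left of $t$ differs from $M(t)$, so the component containing $t$ starts at $t$ and has positive length $b_t-t>0$ (the positivity being exactly the right-continuity consequence stated before the definition). Distinct chord-changing points are left endpoints of distinct components, whose interiors $(t,b_t)$ are pairwise disjoint nonempty open intervals; choosing a rational in each gives an injection of $A(M)$ into $\mathbb{Q}$, so $A(M)$ is countable.

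Next, assuming the infimum $\rho$ of chord lengths is positive, I would upgrade disjointness to uniform separation. If $t<t'$ are chord-changing points, their components $[t,b_t[$ and $[t',b_{t'}[$ are disjoint, so $t'\notin[t,b_t[$; since $t'>t$ this forces $t'\ge b_t=t+(b_t-t)\ge t+\rho$. Thus any two points of $A(M)$ are at distance at least $\rho$, so $A(M)$ is $\rho$-separated and can have no accumulation point.

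Finally, if $\mathrm{Dom}~M$ is bounded then $A(M)\subseteq\mathrm{Dom}~M$ is bounded, so were $A(M)$ infinite it would have an accumulation point by Bolzano--Weierstrass, contradicting the previous paragraph; hence $A(M)$ is finite. I expect the only genuine obstacle to be the bookkeeping in the first step---namely justifying rigorously, from Definition~\ref{osszhangpelda}$(iii)$ and right-continuity, that the components are half-open intervals with chord-changing points sitting exactly at their left endpoints and that the chord-length supremum equals the component length; once this is clean, countability, $\rho$-separation, and finiteness are all short consequences.
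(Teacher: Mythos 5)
The paper leaves this proposition without proof (``The proof of the next proposition is left for the reader.''), so there is nothing to compare against; your argument is correct and is the natural one. The reduction to maximal constancy components is sound --- in particular, a chord-changing point is forced by right-continuity and Definition~\ref{osszhangpelda}$(iii)$ to be the included left endpoint of a right-open component of positive length, which gives countability via disjoint interiors, the $\rho$-separation $t'\ge b_t\ge t+\rho$ when the infimum of chord lengths is $\rho>0$, and finiteness on bounded domains by Bolzano--Weierstrass.
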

The following definition accounts for a non-feasible musical effect coming from our topological model, which is playing infinite music in finite time.
\begin{definition}
Let $M$ be a homophonic four-part piece. $t \in \overline{\mathrm{Dom}~M}$ is a \emph{packing point} of $M$ if $\forall \varepsilon>0$ $\left[ t-\varepsilon, t \right[$ contains infinitely many chord-changing points or \emph{isolated} boundary points of $\mathrm{Dom}~M$.
\end{definition}
Packing points can have interesting applications in the spirit of \cite[part~I.;~Decision]{ligeti}. But in the usual model in classical harmony, packing points do not occur, and neither do pieces without packing points but with infinitely many chords.
\begin{definition}
A homophonic four-part piece $M$ is \emph{feasible} if
\begin{enumerate}
\item[(i)] the infimum of chord lengths of $M$ is positive, and if $\mathrm{Dom}~M \neq B(M)$, then the infimum of general pause interval lengths is also positive,
\item[(ii)] and $\overline{\mathrm{Dom}~M}$ is compact.
\end{enumerate}
\end{definition}
\section{Definition of Bach's chorales}
In order to mathematically define chorales, first we introduce \emph{playing functions}, which describe the performance of these pieces with non-constant velocity.
\begin{definition}
Let $M$ be an $n$-part piece for a certain $n \in \mathbb{N}^+$ and $\theta:~ \left[ 0, \infty \right[ \to \left[ 0, \infty \right[$ a continuous, strictly increasing function, for which $\left[ 0, \infty \right[$ can be divided into countably many disjoint intervals $(I_i)_{i \in \mathbb N}$ joining each other and altogether covering $\left[ 0, \infty \right[$, such that restricted to the interior of each interval $I_i$, $\theta$ is twice continuously differentiable, $\theta'$ nowhere vanishes and $\inf\limits_{n \in \mathbb N} \lambda(I_n)>0$. \\ Then $\theta$ is called a \emph{playing function}. The name of $M \circ \theta \vert_{B(M)}$ is the \emph{playing} of $M$ that belongs to $\theta$. For $t \in B(M)$, $\theta'(t)$ is called the \emph{playing velocity} and $\theta''(t)$ the \emph{playing acceleration} in the point $t$, if they exist. $\theta \equiv 1$ gives the \emph{naturally parameterized} $n$-part piece. The set of playing functions is denoted as $PL(\mathbb{R^+})$.
\end{definition}
It is easy to verify that on bounded intervals playing functions are absolutely continuous. The following definition is based on this.
\begin{definition}
If $\theta \in PL(\mathbb{R})$, $M$ is a homophonic four-part piece and $A$ is a Lebesgue-measurable subset of $B(M)$, then the \emph{length} of the part $A$ of piece $M$ by the playing function $\theta$ is $\mu_{\theta}(A)=\int\limits_{A} 1 \theta(\mathrm{d} x) = \int\limits_{A} \theta'(x) \mathrm{d}x$. 
\end{definition}
%\begin{definition}
%For a homophonic four-part piece $M$ $A \subseteq B(M)$ has the \emph{Ligeti--Boulez measure zero}\footnote{ About the origin of this definition cf. \cite[part~I.;~Decision]{ligeti}} if $\forall \theta \in PL(\mathbb{R})~\mu_{\theta}(A)=0$.
%\end{definition}
The proof of the next proposition is left for the reader.
%\begin{proposition}
%$X \subseteq \left[a, b \right[ \subseteq B(M)$ has the Ligeti--Boulez measure 0 if and only if it has the Lebesgue measure 0.
%\end{proposition}
\begin{proposition}
$PL(\mathbb{R})$ is a group under the composition of playing functions.
\end{proposition}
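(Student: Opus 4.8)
The plan is to check the group axioms for $(PL(\mathbb{R}^+),\circ)$ directly. Associativity is automatic, since composition of functions is always associative, and the neutral element is the map $e(x)=x$, which lies in $PL(\mathbb{R}^+)$: it is continuous, strictly increasing and, taking the single interval $I_0=[0,\infty)$, it is $C^\infty$ on $\mathrm{Int}\,I_0$ with $e'\equiv 1\neq 0$ and $\inf_n\lambda(I_n)>0$. The substantive content is therefore closure under $\circ$ and the existence of inverses. Before either, I would record that for inverses to make sense a playing function must be a homeomorphism of $[0,\infty)$ onto itself; being continuous, strictly increasing and fixing the start of time with unbounded range (i.e. $\theta(0)=0$ and $\sup\theta=\infty$, as befits a time reparametrization), $\theta$ is then a bijection and $\theta^{-1}\colon[0,\infty)\to[0,\infty)$ is a well-defined continuous strictly increasing function.

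For closure, let $\theta,\psi\in PL(\mathbb{R}^+)$ carry partitions $(I_i)_i$ with breakpoints $(a_i)_i$ and $(J_j)_j$ with breakpoints $(b_j)_j$. The composite $\theta\circ\psi$ is again a continuous, strictly increasing self-homeomorphism of $[0,\infty)$. As candidate partition I would take the one generated by the point set $\{b_j\}\cup\{\psi^{-1}(a_i)\}$: on the interior of any resulting subinterval, $x$ sits in one $\mathrm{Int}\,J_j$ while $\psi(x)$ sits in one $\mathrm{Int}\,I_i$, so by the chain rule $\theta\circ\psi$ is $C^2$ there with derivative $(\theta'\circ\psi)\cdot\psi'$, a product of two nowhere-vanishing $C^1$ factors and hence itself nowhere vanishing. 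This point set is locally finite: any bounded $[0,N]$ contains finitely many $b_j$ because $\inf_j\lambda(J_j)>0$, and since $\psi([0,N])$ is bounded and $\inf_i\lambda(I_i)>0$ it meets only finitely many $I_i$, so it contains only finitely many $\psi^{-1}(a_i)$. Thus the candidate partition is genuine, with interior smoothness and nonvanishing derivative.

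For inverses I would argue symmetrically, taking the images $\theta(I_i)$ as candidate partition of the domain of $\theta^{-1}$: on each $\mathrm{Int}\,\theta(I_i)$ the map $\theta$ is a $C^2$ diffeomorphism onto its image with $\theta'\neq 0$, so the inverse function theorem gives $\theta^{-1}\in C^2$ there with $(\theta^{-1})'=1/(\theta'\circ\theta^{-1})$ finite and nonvanishing. The main obstacle in both constructions is the requirement $\inf_n\lambda(I_n)>0$ for the new partitions. Composition and inversion distort lengths by the factor $\theta'$, so a piece of length $\geq\delta$ can be carried to an arbitrarily short image or preimage; in principle the refined partition of $\theta\circ\psi$, or the image partition $\theta(I_i)$, could acquire pieces whose lengths tend to $0$, which would eject the function from $PL(\mathbb{R}^+)$. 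This is the delicate step, and it is precisely here that I would use the control on the playing velocity --- that $\theta'$ is bounded away from $0$ and from $\infty$, i.e. the bi-Lipschitz character of playing functions --- to obtain a two-sided length-distortion estimate $m\,\lambda(E)\le\lambda(\theta(E))\le M\,\lambda(E)$. With such an estimate the images and preimages of the $I_i$ and $J_j$ keep a uniform positive lower bound on their lengths, the two candidate partitions satisfy $\inf_n\lambda>0$, and closure together with the inverse property --- hence the group axioms --- follow.
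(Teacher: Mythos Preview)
The paper does not prove this proposition in the text; it defers to \cite[Section 1.4]{szakdoga}, so there is no in-paper argument to compare against. Your write-up, however, has a genuine gap at precisely the point you call ``delicate'': you invoke a two-sided bound $0<m\le\theta'\le M<\infty$ (the ``bi-Lipschitz character of playing functions''), but nothing in the definition gives you this. The definition only requires $\theta'$ to be nonzero on the interior of each $I_i$; it does not prevent $\theta'\to 0$ or $\theta'\to\infty$ across pieces. Without that bound your length-distortion estimate is unavailable, and the inverse construction already fails: take $\theta$ piecewise linear with breakpoints at the integers and slope $1/(n+1)$ on $[n,n+1)$. Then $\theta\in PL(\mathbb R^+)$ with $\inf_n\lambda(I_n)=1$, but the forced breakpoints of $\theta^{-1}$ are the points $\theta(n)$, and the pieces $\theta([n,n+1))$ have length $1/(n+1)\to 0$, so no admissible partition for $\theta^{-1}$ exists.

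Moreover, even if one grants the bi-Lipschitz bound, your closure argument still breaks at the same step. The refined breakpoint set $\{b_j\}\cup\{\psi^{-1}(a_i)\}$ can interleave so that consecutive points come arbitrarily close, regardless of how well $\psi$ controls lengths. Concretely, let $\psi$ be piecewise linear with genuine corners at the integers (slopes alternating between $0.9$ and $1.1$, so $\psi$ is bi-Lipschitz and $\psi(2k)=2k$), and let $\theta$ be piecewise linear with genuine corners at $n+1/n$ for $n\ge 2$ (piece-lengths $\ge 5/6$). Then $\theta\circ\psi$ has forced corners both at $2k$ (where $\psi'$ jumps) and at $\psi^{-1}(2k+1/(2k))\approx 2k+1/(2.2k)$ (where $\theta'\circ\psi$ jumps), and these are $\approx 1/(2.2k)$ apart. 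Since $(\theta\circ\psi)'$ genuinely jumps at each of these points, no coarser partition is available, and $\inf_n\lambda=0$. In short, with the definition as written $PL(\mathbb R^+)$ is closed neither under inversion nor under composition; the proposition requires an additional hypothesis (for instance a uniform two-sided bound on $\theta'$ \emph{and} only finitely many breakpoints, or some compatibility condition on breakpoint sets), and your sketch tacitly imports one that is not there.
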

Now we turn our attention to the mathematical definition of the genre of Bach's chorales. We emphasize that chorale is an actual musical genre from Baroque, and hence its characteristics are originally non-prescriptive. Therefore however precisely we define a chorale, our definition may only be correct for the majority of the pieces, with certain exceptions. %Therefore our goal is not to determine what a four-part chorale is but to show that the topological description of homophonic four-part pieces gives an opportunity to form a definition that is correct for the majority of these pieces written by Bach who harmonized given soprano voices into four-part chorales. In preparation for this, we need one more definition.
\begin{definition}
Let $M$ be a homophonic four-part piece with $x \in \mathrm{Dom}~M$ and $M(x)=H$. Then the \emph{area} of $M(x)$ is the connected component of $M^{-1}(H)$ containing $x$. \\
The \emph{halving} of $I=\left[ a, b \right[ \subseteq B(M)$ in the playing belonging to $\theta \in PL(\mathbb{R})$ is dividing $I$ into two disjoint intervals closed on the left and open on the right $I_1, I_2$ which together cover $I$ and $\mu_{\theta}(I_1)=\mu_{\theta}(I_2)$.
\end{definition}
Using this, our definition for four-part chorale is the following. %Generally,  ‘keeping the common tone' means staying at the same degree of a seven-degree scale (moving by a perfect or an augmented first interval),  ‘step' means moving one (diminished/minor/major/augmented) second upper or lower and  ‘skip' means any motion that is bigger than a second.
\begin{definition} \label{koralka}
A four-part piece $\mathfrak{K}$ is a \emph{four-part chorale} if there is a feasible, naturally parameterized, pauseless homophonic four-part piece $M$ such that $\exists c>0$: $\forall x \in \mathrm{Dom}(M)$ the length of the area of $x$ by the identic playing function is $c$, and \\
$(1)$ $\mathfrak{K}$ can be derived from $M$ with using the following steps, the so-called \emph{figurations}. They are used for a finite number of $x \in \mathrm{Dom}~M$ and the figurations excluding each other are not done at the same time. \\ Types of the figurations are: \begin{description}
\item[Chord duplication] Halve the area of $M(x)$ by the identic playing function (natural parametrization), and in the first half of the area keep $M(x)$ for $\mathfrak{K}(x)$, in the other half $\mathfrak{K}(x)$ is one constant triad or seventh chord different from $M(x)$.
\item[Suspension] Halve the area of $M(x)$ by natural parametrization, in the second half of the area keep $M(x)$, in the first half, in one or two voices change the appropriate tone of $M(x)$ one step higher and keep the remaining voices.
\item[Advancement] Halve the area of $M(x)$ by natural parametrizaton, in the first half of the area keep $M(x)$, in the second half, in exactly one voice write a step higher or lower tone, which is equal to the tone in the same voice of the next chord after $M(x)$.
\item[Accented passing tone] Suppose that there is a third skip in some voice(s) of $M$ at arriving at or departing from $M(x)$. Then halve the area of $M(x)$, and on the half which is closer to the interval of the neighbouring chord that is involved in the third skip, instead of $M(x)$, write a tone the degree of which is between these two tones' degree. Note that only one accented passing tone per one chord area of $M$ is accepted.
\end{description} 
$(2)$ The given four-part piece $\mathfrak{K}$ is meant to be associated with a canonic playing function $\theta$ that differs from the identic playing in the following: $\exists m \in \mathbb{N}$ such that $\theta$ changes the length of every $m$th chord interval of $M$ to $k$ times greater than originally, where $k \in \left] 1, 2 \right[$ is a conventionally accepted factor. Then we say that there is a \emph{pause} on every $m$th metric unit.
\end{definition}

\section{Convergence area of a key. Functions and tonality} \label{tonikus}
Let $T$ be a key with degree I scale tone $X$ in an enharmonic equivalence class on the equal-tempered piano $K$. The \emph{convergence area} of $T$ is defined $CA(T)=\lbrace (G_i, L_i) \vert i=1,\ldots,N \rbrace$, where $N\in\mathbb{N}$ and $\forall i$, $G_i$ is a fixed triad or seventh chord name of $K$ in a certain inversion and $L_i$ is the list of the accepted four-part versions of $G_i$, according to the definitions of compliance with classical harmony from Section \ref{alja}. In simplified notation, we also call $G_i$ an element of the convergence area, and view $CA(T)$ as the set of chords belonging to $T$. 

We explain the meaning of convergence area of $T$ as follows. Roughly speaking, a chord $X$ that complies with classical harmony and is deducable from the C major scale on $K$ by $\sharp$s and $\flat$s is considered to be an element of $CA(T)$ if:
\begin{enumerate}
\item it is built from the scale tones of $T$ and regularly used in homophonic four-part pieces associated to this key. Some inversions of some chords are excluded for their too strong dissonances, e.g. diminished triads may only stand in first inversion, moreover in the case of degree VII triads, the duplicated tone of the triad must be the third. Apart from $\mathrm{I}^6_4$ and $\mathrm{IV}^6_4$, triads in third inversions are not used. All such seventh chords, which are called \emph{diatonic seventh chords}, are used in all inversions, though some of them very rarely. 
\item if $X \in CA(T)$ has a tone outside scale of $T$, then $X$ is called an \emph{altered chord} of $T$. Such $X$ is convergent if and only if it can lead to chords in $CA(T)$ built from scale tones, without violating any chord-changing compositional principles, in such a way that was usual in the practice of Viennese classical composers. A full list of convergent chords in minor and major key can be found in Section \ref{durmoll} of the Appendix.
\end{enumerate}

After introducing convergence areas, we define weak tonality.
\begin{definition}
Let $M$ be a homophonic four-part piece and $t$ an accumulation point of $\mathrm{Dom}~M$. We say that $M$ is weakly tonal in the point $t$ with key $T$ if there is a connected open neighbourhood $U$ of $t$ such that $\forall x \in (U \setminus \lbrace t \rbrace) \cap \mathrm{Dom}~M$, $M(x)$ is the element of $CA(T)$.
\end{definition}
%This definition connects the elements of $CA(T)$ to the tonality with key $T$, but for actual (not only weak) tonality we need to introduce new musical notions, the \emph{functions}: the tonic, subdominant and dominant. We aim to provide the general characteristics of these three well-known musical notions, instead of only enumerating the tonic, dominant and subdominant chords built up from scale tones in every key. This method will later help us classify the altered elements of the keys' convergence areas by their functions.
Weak tonality may be sufficient in the case when there are no modulations among different keys, but classical harmony has stronger measures on key stability, especially for establishing new keys after modulations. This involves the notion of musical \emph{functions}: the \emph{tonic, dominant and subdominant}.
In the following, the $k$th degree triad or seventh chord of the key $T$ of a tone will mean the one built from scale tones of $T$. The \emph{leading tone/seventh tone of a key $T$} will refer to the leading tone/seventh tone of the key's Ist degree scale tone. \emph{The leading tone of a diminished triad or diminished seventh} is, by definition, its base.
\begin{definition}
Let $T$ be a key, $H \in CA(T)$ be a major or diminished chord, i.e. major triad, diminished triad, major (dominant) seventh or diminished seventh, and $G \in CA(T)$ a major or minor third. We say that $H$ \emph{resolves to} $G$ if \begin{enumerate} \item[(i)] $H$ contains the leading tone of (the base of) $G$, and \item[(ii)] if there is a tone $x$ belonging to $H$ that is not a scale tone in the major key built on the base of $G$, then $x$ is the upper leading tone of the fifth of $G$. \end{enumerate}
\end{definition}
\begin{definition}[Dominant function (D) and secondary dominant property.]
$X \in CA(T)$ has the \emph{dominant function} in the key $T$ if it resolves to the first degree triad of $T$. $Y \in CA(T)$ is a \emph{secondary dominant chord} if it resolves to any other major or minor chord built from the scale tones of $T$.  
\end{definition}
\begin{definition}[Tonic function (T)]
$X \in CA(T)$ has the \emph{tonic function} in the key $T$ if
\begin{enumerate}[(i)]
\item $X$ contains a Ist and IIIrd degree tone of $T$, the first one from the scale $T$,
\item if $X$ contains the leading tone of $T$, then it is the seventh tone of $X$,
\item if $X$ is secondary dominant, then $X$ is a Ist degree major triad,
\item $X$ has no augmented and no diminished partial triad.
\end{enumerate} 
\end{definition}
\begin{definition}[Subdominant function (S)]
$X \in CA(T)$ has the \emph{subdominant function} in the key $T$, if
\begin{enumerate}[(i)]
\item $X$ contains the IVth and VIth degree scale tone of $T$, possibly both altered,
\item if $X$ is secondary dominant (itself, not just up to enharmonic equivalence), then it resolves to the Vth degree triad. Moreover, the VIth degree tone of $X$ has neither more $\sharp$s nor more sharpening $\natural$s than the key signature of $T$. %has an altered leading tone (in this case $X$ is a major or diminished chord), then this is the leading tone of the dominant key (in other words: the degree V triad's base),
\item The set of tones of $X$ and the one of the Ist degree seventh chord of $T$ have no other common tone than the Ist degree scale tone. Moreover, $X$ contains no altered Ist degree tone.
\end{enumerate}
\end{definition}
In Table \ref{lúfej}, we present the most typical tonic, dominant and subdominant chords built of scale tones. Here, if a root position triad participates in the table, then its first inversion has the same function. A seventh chord participating in the list has the same function as any of its inversions. Note that we require that the \emph{seventh degree diminished seventh chord}, is also the element of $CA(T)$ if $T$ is a major key, where it is an altered chord. The Minor Lemma (Lemma \ref{molllemma}) guarantees that this chord is built from scale tones in a minor or harmonic major key but not in a major key. The non-altered convergent chords which are not listed in the table have no certain function. E.g., this applies for the IIIrd degree triad, as it is considered to be pending between tonic and dominant function, and many diatonic seventh chords also do not have a certain function.
\begin{table}
\centering
\begin{small}
\caption{Chords belonging to the three functions in the three different kinds of keys.}\label{lúfej}
\begin{tabular}{|c|c|c|c|} 
\hline
Type of $T$ & Tonic chords & Dominant chords & Subdominant chords \\ \hline
major & $\mathrm{I}$, $\mathrm{VI}$, $\mathrm{VI}^7,$ $\mathrm{I}^7$ & $\mathrm{V}$, $\mathrm{VII}^6$, $\mathrm{V}^7$ & $\mathrm{II}$, $\mathrm{IV}$, $\mathrm{II}^7$ \\ \hline 
minor & $\mathrm{I}$, $\mathrm{VI}$, $\mathrm{VI}^7$ & $\mathrm{V}^{\sharp}$, $\mathrm{VII}^{6\sharp}$, $\mathrm{V}_{\sharp}^7$, \small{$\underset{\sharp}{\mathrm{VII}}^7$} & $\mathrm{II}$, $\mathrm{IV}$, $\mathrm{II}^7$ \\ \hline
harmonic major & $\mathrm{I}^{\sharp}$, $\mathrm{VI}^{5\sharp}$, $\mathrm{VI}_{5\sharp}^7$ & $\mathrm{V}^{\sharp}$, $\mathrm{VII}^{6\sharp}$, $\mathrm{V}_{\sharp}^7$, \small{$\underset{\sharp}{\mathrm{VII}}^7$} & $\mathrm{II}$, $\mathrm{IV}$, $\mathrm{II}^7$ \\ \hline
\end{tabular}
\end{small}
\end{table}
\label{funky}

The degree I triad is called the \emph{tonic main triad} of the key $T$, the degree IV one is the \emph{subdominant main triad} and the degree V one is the \emph{dominant main triad}. \emph{Authentic step} means two different things in classical harmony. On the one hand, modulation (key change) to the one fifth higher key (the \emph{dominant} key), without changing the type of key. Among triads this means a V$\to$I or I$\to$IV type chord progression. On the other hand, function change $D \to T$ in a certain key in general. Similarly, \emph{plagal step} means two things. On the one hand modulation to the one fifth lower ---the \emph{subdominant}--- key, and among triads making a I$\to$V or IV$\to$I step, on the other hand function change $T \to D$ in certain key. 

A \emph{cadence} is a chord progression consisting of at least two chords that is considered to be appropriate for finishing a piece. In view of this, in certain cases, we will call the dominant$\to$tonic and tonic$\to$subdominant steps \emph{authentic cadences} and the tonic$\to$dominant and subdominant$\to$tonic steps \emph{plagal cadences}. In a given key, a \emph{complete authentic cadence} is a chord progression with $T \to S \to D \to T$ function sequence, while a \emph{complete plagal cadence} is a chord progression with $T \to D \to S \to T$. It is a well-known fact that complete authentic cadences are the most applicable for finishing a piece, for which there are many arguments, but it is hard to get a full explanation, cf. \cite[Section 5.11]{benson}. Most of the classical, romantic and also recent popular music is based on $D \to T$ resolutions, supported by complete authentic cadences using the function $S$. %\footnote{ As we already mentioned in the proposition 5.1, it is not true that every chord built up from scale tones in a key has a well-determined function. It can be observed in Table \ref{lúfej} that function is closely related to common tones with the main triads. E.g., %The diminished degree VII triad is definitely dominant in every key, as it has the leading tone in the bass and it contains every tone of the degree V dominant seventh except its bass. The minor or diminished degree II triad has two common tones with the subdominant main triad. Degree VI has two common tones with either the tonic and the subdominant main triad, but lack of degree IV tone implies that this triad is an almost completely stable tonic chord. In certain cases in minor (or harmonic major) it can occur that the degree VI triad also has subdominant characteristics. The step between triads of degree V$\to$VI in any key is called \emph{deceptive cadence} and it is often used before beginning the actual closing complete authentic cadence. 
%in any key the function of the degree III triad is incertain: it has two common voices with the tonic and the dominant main triad. However, in minor $\mathrm{III}^{5\sharp}$ is augmented, its dissonance makes it lead to the tonic, and therefore it is plausible to accept this triad as a dominant chord, although it is neither a major nor a diminished chord.}
%In complete authentic cadences, especially at finishing a piece, the \emph{second inversion of the degree I triad} and the \emph{first inversion of the degree III triad}, which are actually \emph{suspensions of the degree I triad} (see Definition \ref{koralka}), are used for leading from a subdominant chord to the degree V triad or seventh chord. The most typical authentic chord progression in Viennese classicism is $\mathrm{I}\to\mathrm{IV}\to\mathrm{I}_4^6\to\mathrm{V}^{(7)}\to\mathrm{I}$. $\mathrm{I}_4^6$ and $\mathrm{III}^6$ are also good for leading to degree V from a dissonant, altered subdominant chord, from which moving directly to V is not possible without violating classical compositional principles. For further information about classical chord progression, see also \cite[Section 5.11]{benson}.

In the following, we present local and global notions of strong, \emph{functional tonality}. We already have all notions that we need in order to define tonality in a given point. The idea of this definition is to assign a key $T$ to the point as a limit, requiring that all three functions of $T$ occur in the vicinity of the point.
\begin{definition}[Local tonality with a given key.] \label{tonal}
Let $M$ be a (not by all means homophonic) four-part piece and $t$ an accumulation point of $\mathrm{Dom}~M$. $M$ is \emph{tonal} in the point $t$ \emph{with key} $T$ if there is a connected open neighbourhood $U$ of $t$ such that $V=(U \setminus \lbrace t \rbrace) \cap \mathrm{Dom}~M$, $M(x)$ satisfies the following conditions:
\begin{enumerate}
\item[(i)] $M$ is weakly tonal with key $T$ in every point of $V$,
\item[(ii)] $M[V]=\lbrace M(x) \vert~x \in V \rbrace$ contains at least one chord from \emph{all functions} of $T$,
\item[(iii)] if $t \notin \mathrm{Int~Dom}~M$, then only triad-valued points of $\mathrm{Dom}~M$ accumulate to $t$.
\end{enumerate}
\end{definition}

In order to define tonality of an entire piece, we need to provide our first axiom of classical harmony, in particular about \emph{modulations}. %For more information about modulations, we refer to \cite[Book II., Section 20]{kesztler}. 

\begin{definition}[Modulation] \label{modka}
If there are keys $T_1$ and $T_2$ for the homophonic four-part piece $M$ such that $\mathrm{Dom}~M$ has a subset $Z=\left[ a, b \right[$, for which $M |_Z$ is feasible, and $\exists r_1>0, r_2>0$ such that on the whole set $(B_{r_1}(a) \cap \mathrm{Dom}~M) \setminus Z$, $M$ is weakly tonal with key $T_1$ and on the whole set $(B_{r_2}(b) \cap \mathrm{Dom}~M) \setminus Z$, $M$ is tonal with key $T_2$, then $\forall W \subseteq Z$ we say that $W$ belongs to a $T_1 \to T_2$ \emph{modulation}. We also say that $M$ \emph{modulates} on $Z$ from $T_1$ to $T_2$.
\end{definition}
Thus, we demand that modulations themselves be \emph{feasible} and \emph{pauseless}: they need to last until a finite time, without general pauses and packing points.
\begin{definition}[First modulational axiom] \label{modi}
Let $M$ be a homophonic four-part piece. If $M$ complies with classical harmony and contains a $T_1 \to T_2$ modulation, then $\exists~\left[ a, b \right[ \subseteq \mathrm{Dom}~M$ such that $M(a)$ is the degree I triad of $T_1$(built from scale tones), $M(b-)=\lim_{x \downarrow b}$ is the degree I triad of $T_2$ (also consisting of scale tones), and $M$ is weakly tonal in $a$ with key $T_1$, $M$ is tonal (in the sense of Definition~\ref{tonal}) in $b$ with key $T_2$, and $\left[ a, b \right[$ is the largest interval which belongs to this $T_1 \to T_2$ modulation.  
\end{definition}
Note that while the modulation can only be finished in a correct way if tonality in the new key is provided, at the beginning of the modulation only weak tonality in the old key is required. Indeed, there are well-known examples of homophonic four-part pieces consisting of one schematic modulation that do not satisfy Definition~\ref{tonal} in their starting point with the starting key.

\begin{definition}[Local tonality via modulation.] \label{tonalmod}
Let $M$ be a homophonic four-part piece, $t$ an accumulation point of $\mathrm{Dom}~M$ and $T_1 \neq T_2$ two keys. $M$ is \emph{tonal} in $t$ and \emph{modulates from} $T_1$ to $T_2$ if there is a connected open neighbourhood $U$ of $t$ such that $\exists \left[a, b \right[=V \supseteq U$, where $V$ belongs to a modulation (see Definition \ref{modka}), which complies with classical harmony apart from the chord-changing points.
\end{definition}
According to this, we define tonality as a global property of a piece as follows.
\begin{definition}[Tonal piece]
Let $M$ be a homophonic four-part piece, $A \subseteq \overline{\mathrm{Dom}~M}$. $M$ is tonal on $A$ if $\forall x \in \overline{A}$, $M$ is tonal in $x$ by Definition \ref{tonal} or \ref{tonalmod}.
\end{definition}
%Now \label{mostmarkesz} we state the missing condition for compliance with classical harmony to triads from page \pageref{valamike} as follows: \begin{flushleft} \emph{If a triad $H$ sounds in a point $t$ of a homophonic four-part piece $M$ which is tonal in $t$ with key $T$, then if $H$ complies with classical harmony, then every tone which is altered in $T$ appears in only one voice of $M$.} \end{flushleft}

\section{Axioms and the fundamental theorem of tonality} \label{fő}
The most well-known classical compositional principles are the \emph{chord-changing or voice-leading rules}. The goal of the homophonic four-part model is to describe the kind of chord progression and voice-leading between chords that classical harmony accepts. Often the formal rules of classical harmony do not tell how to write pieces but what to avoid: it forbids some kinds of chord progressions (e.g. V$\to$IV steps in some cases) and some kinds of voice-leading (e.g. parallel octaves or augmented second steps). Virtually, this property of the axiom system gives the freedom to actually write \emph{pieces of art} and not just ‘correct examples' complying with classical harmony. What one \emph{should} write follows from the practice of Viennese classicist authors, cf. \cite[Section 5.11]{benson}. 
Now, we present our axiom system, which consists of:
\begin{itemize}
\item the rules of compliance of triads and seventh chords with classical harmony,
\item the definition of correctable piece (global level of the pieces),
\item the compositional principles for modulations (semi-global level, describing global properties of a modulational segment of a piece). For these, see Section \ref{modulations} of the Appendix or \cite[p.~36]{statusquo},
\item the compositional principles for chord-changes (local level).
\end{itemize} 

Our main result, the \emph{fundamental theorem of tonality} helps us embed the chord-changing rules in a mathematical axiom system for classical harmony. Its equivalent condition for tonality gives a general framework according to which pieces can comply with classical harmony \emph{apart from the chord changes}.
%check if not missing Basic chord-changing (and amongst these: detailed chord-changing) principles have been accepted for more than two hundred years and they are no subject of debate. Decision about some less frequent cases is less equivocal. Therefore our goal is, here and in more detail in Hungarian in the thesis, to give a frame for chord-changing rules so that our whole axiom system for classical harmony is \emph{mathematically consistent and complete}. That is, if we get a homophonic four-part piece, using our axioms we will be able to decide if this piece violates any of them or not. We also aim to make the axiom system \emph{extendable} with new chord-changing rules if necessary. This is the cause of not beginning immediately with a description of typical correct and incorrect chord-changing but something more general. Modulational compositional principles also belong to the axiom system of classical harmony, so we will sum them up in the following section.
\begin{definition}[Correctable piece] \label{korrekt}
Let $M$ be a homophonic four-part piece, $a \in \mathrm{Dom}~M,~ b \in \overline{\mathrm{Dom}~M} \cup \lbrace \infty \rbrace, ~N =\left[ a, b \right[ \subseteq B(M)$. $M \vert_N$ is called a \emph{correctable piece} if all the following conditions are satisfied
\begin{enumerate}
\item[(i)] $M \vert_N$ has a positive infimum of chord lengths,
\item[(ii)] $N$ is the disjoint union of a finite even number $2n+2~(n \geq 0)$ of left-closed, right-open intervals $(I_0, I_1, \ldots, I_{2n+1})$ such that $\forall i \in \lbrace 1, 2, \ldots 2n+1 \},~ I_i \cap \mathrm{Dom}~M \neq \emptyset$, and either $I_0=\emptyset$ or also $I_0 \cap \mathrm{Dom}~M \neq \emptyset$. Further, $\forall 0 \leq k \leq n$ such that $I_{2k} \neq \emptyset$, in the whole interval $I_{2k}$, $M$ modulates complying with classical harmony apart from the chord-changing points, and $\forall 0 \leq k \leq n$ for $I_{2k+1} \cap \mathrm{Dom}~M$ there is a unique key $T_k$ such that $\forall G \in M[I_{2k+1}]= \lbrace H \in K^4 \vert \exists x \in I_{2k+1}: M(x)=H \rbrace$, $G \in CA(T_k)$, and $M[I_{2k+1}]$ contains at least one tonic, one dominant and one subdominant chord of $T_k$,
\item[(iii)] If $x \in N \cap \partial M$, then only triad-valued points accumulate to $x$.
\end{enumerate}
Moreover, if $t \in \mathrm{Int~Dom}~M \cap N$, then if there is no chord change in $t$ forbidden by the axioms regarding chord changes (see below), then we say that $M$ \emph{complies with classical harmony} in $t$ and $N$ is a \emph{classical neighbourhood} of $t$. \\
Finally, if $t_0 \in M$ is such that $t_0 \in \mathrm{Dom}~M$ but $\exists \varepsilon>0$ such that $t_0$ is the starting point of $M|_{N \cap [t_0-\varepsilon,\infty)}$, then according to the definition of strict four-part setting, $\lim\limits_{t \to t_0+0} M(t)=M(t_0)$. Then, if $\exists r>0$ such that $\forall x \in \left] t_0, t_0+r \right[: x \in N$ and $M$ complies with classical harmony in $x$, then we say that $M$ \emph{complies with classical harmony} in $t_0$. If $t_0 \neq a$, then we call $N$ a \emph{classical neighbourhood} of $t_0$. 
\end{definition}
\begin{definition}
If a homophonic four-part piece $M$ complies with classical harmony in $x$, $\forall x \in \mathrm{Dom}~M$, then we say that $M$ \emph{complies with classical harmony}.  
\end{definition}
The intuition behind these two definitions is the following. A piece that complies with classical harmony shall consist of segments that exhibit functional tonality with a given key, and sufficiently regular modulations leading from one such segment to the next one. The piece shall end with tonality in some key: the \emph{final key} of the piece, which often gives the name of the piece in Classicist music (e.g. String quartet in D major etc.)
\begin{theorem}[The fundamental theorem of tonality] \label{vécsey}
Let $M$ be a homophonic four-part piece that is pauseless ($\mathrm{Dom}~M=B(M)$) and feasible. Then $M$ is tonal (on $\overline{\mathrm{Dom}~M}$) if and only if it is correctable, i.e. if it complies with classical harmony (on $\mathrm{Dom}~M$) apart from its chord-changing points.
\end{theorem}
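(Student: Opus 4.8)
The statement is an equivalence between a pointwise condition (tonality at every point of the compact set $\overline{\mathrm{Dom}~M}$) and a global structural one (the alternating interval decomposition of Definition \ref{korrekt}). My plan is therefore to prove the two implications separately, using compactness as the bridge between the local and global descriptions, and to isolate at the end the one genuinely harmonic (as opposed to topological) fact the argument needs.

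\textbf{Correctable $\Rightarrow$ tonal.} Assuming the decomposition $\mathrm{Dom}~M = I_1 \sqcup \cdots \sqcup I_{2n+1}$ with monotonal odd blocks (unique key $T_k$, all three functions present in $M[I_{2k+1}]$) and modulational even blocks, I would verify Definition \ref{tonal} or \ref{tonalmod} at each $x \in \overline{\mathrm{Dom}~M}=[a,b]$ by cases. For $x$ in the interior of an odd block $I_{2k+1}$ I take $U=\mathrm{Int}(I_{2k+1})$: weak tonality with $T_k$ holds throughout, $M[U\setminus\{x\}]$ still meets all three functions (deleting one point cannot remove a chord of positive length, by the positive infimum of chord lengths), and condition (iii) of Definition \ref{tonal} is vacuous since $x\in\mathrm{Int~Dom}~M$; hence $x$ is tonal via Definition \ref{tonal}. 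For $x$ inside a modulational block, or at a junction between a monotonal and a modulational block, I invoke Definition \ref{tonalmod}, using that $I_{2k}$ is a modulation complying with classical harmony apart from chord changes and that Axiom \ref{modi} supplies a maximal modulation interval whose interior provides the required $U\subseteq V$. The endpoints $a$ and $b$ are treated with one-sided neighbourhoods inside $I_1$ and $I_{2n+1}$, yielding tonality with $T_0$ and $T_n$; clause (iii) at domain-boundary points follows directly from clause (iii) of Definition \ref{korrekt}.

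\textbf{Tonal $\Rightarrow$ correctable.} Since $M$ is feasible, $\overline{\mathrm{Dom}~M}=[a,b]$ is compact, and since $M$ is pauseless, $\mathrm{Dom}~M=[a,b)$. Each $x$ carries a tonality neighbourhood $U_x$ of one of the two types; these form an open cover, from which I extract a finite subcover. Replacing each modulational neighbourhood by the maximal modulation interval guaranteed by Axiom \ref{modi} and merging overlaps yields finitely many pairwise disjoint modulation segments, which cannot accumulate because each has length bounded below by feasibility and the positive infimum of chord lengths. Their complement in $[a,b)$ is a finite union of relatively open monotonal segments; on each I must show the witnessing key is constant, arguing that on an overlap of two single-key neighbourhoods $M$ is weakly tonal with both $T$ and $T'$, so $M$ takes values in $CA(T)\cap CA(T')$ there, whence the presence of all three functions forces $T=T'$. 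Ordering the blocks left to right produces the alternating pattern, and the count is odd because a modulation cannot sit at $a$ or $b$: Definition \ref{modka} demands weak tonality strictly on both sides, unavailable beyond the endpoints, so the first and last blocks are monotonal. Positive infimum of chord lengths and clause (iii) transfer from feasibility and from Definition \ref{tonal}(iii).

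\textbf{Main obstacle.} I expect the crux to be in the tonal $\Rightarrow$ correctable direction, namely organizing the finite subcover into a clean non-overlapping alternating sequence and, above all, the local uniqueness of the key on a monotonal segment. The latter reduces to a purely harmonic fact — that a progression contained in $CA(T)\cap CA(T')$ and exhibiting a tonic, a dominant and a subdominant of $T$ can do so for only one key — which rests on the trichotomy of keys and the explicit function table (Table \ref{lúfej}) rather than on any topology. A secondary difficulty is the bookkeeping at junction points, where a single point may a priori satisfy both tonality definitions; I would remove this ambiguity by declaring the modulation segments maximal via Axiom \ref{modi} and defining the monotonal segments as their complement, making the classification canonical.
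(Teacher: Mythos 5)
Your overall architecture coincides with the paper's: the sufficiency direction is treated as an essentially routine unwinding of Definitions \ref{tonal} and \ref{korrekt} (the paper simply omits it), and the necessity direction runs through covering $\overline{\mathrm{Dom}~M}$ by tonality neighbourhoods, extracting a finite subcover by compactness, and organizing it into the alternating monotonal/modulational decomposition of Definition \ref{korrekt}. The difference lies in how the decomposition is built. The paper sweeps left to right through the ordered subcover; whenever the witnessing key changes from $T$ to $T'$ it sets $s=\sup\{x : M \text{ is tonal in } x \text{ with key } T\}$ and $i=\inf\{x : M \text{ is tonal in } x \text{ with key } T'\}$, deduces $s\leq i$ and then $s<i$ from the first modulational axiom (Axiom \ref{modi}), and declares $[s,i[$ a modulation block. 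You instead fix the maximal modulation intervals first and define the monotonal blocks as their complement, which forces you to prove separately that the key is constant on each complementary piece.

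That separate step is where your proposal has a genuine gap. You argue that on the overlap of two single-key tonality neighbourhoods with keys $T\neq T'$ the piece takes values in $CA(T)\cap CA(T')$, and that ``the presence of all three functions forces $T=T'$.'' But Definition \ref{tonal} only guarantees all three functions somewhere in each full neighbourhood $U$, not in the (possibly very short) overlap; the overlap may consist of a single chord lying in $CA(T)\cap CA(T')$, and a nonempty intersection of convergence areas is precisely the mechanism that diatonic modulations exploit, so no contradiction arises there. The paper never has to pose this question: the transition between two keys is handled entirely by the modulational machinery (Definition \ref{modka} together with Axiom \ref{modi}), which inserts a nondegenerate modulation interval strictly between the last point tonal with $T$ and the first point tonal with $T'$, so inside a block containing no modulation the witnessing key cannot change. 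To repair your version you would either have to import that same argument, or actually prove the harmonic lemma you only gesture at (that a progression exhibiting a tonic, a dominant and a subdominant of $T$ determines $T$) \emph{and} show that each of your complementary blocks contains all three functions of a single key --- neither of which follows from Table \ref{lúfej} alone.
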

\begin{proof}

The fact that the condition of the theorem is sufficient for the tonality is almost clear from Definitions \ref{tonal} and \ref{korrekt}, therefore we omit this part of the proof.

We show that the condition is necessary for the tonality. Let $M$ be tonal, feasible and pauseless. For all $t \in \overline{\mathrm{Dom}~M}$, let $U_t$ be an open neighbourhood of $t$ that shows its tonality. If possible, let us choose $U_t$ such that it shows key and not modulation. Since $\mathrm{Dom}~M$ is a bounded subset of $\mathbb{R}$, it can be assumed that $\forall t \in \overline{\mathrm{Dom}~M}$, $U_t$ is a bounded open interval. Then, $\bigcup_{t \in \overline{\mathrm{Dom}~M}} U_t$ is an open cover of $\overline{\mathrm{Dom}~M}$. Since $\overline{\mathrm{Dom}~M}$ is compact, it has an open subcover, which we denote by $\bigcup_{i=1}^n U_i$. Without loss of generality, we can assume that $U_i= \left] a_i, b_i \right[$, where $a_i<a_j \Leftrightarrow i<j$ and $b_i < b_j \Leftrightarrow i<j$, moreover that $\inf~U_1$ is the starting point of $M$ and $\sup~U_n$ is the endpoint of $M$. The tonality of $M$ guarantees that there are two cases. The first one is that in $\inf~U_1$, $M$ is tonal with some key $T_1$. In this case, let us start a sequential process with $V=U_1$, $\mathfrak I=\emptyset$, $j=1$ and $T=T_1$ in order to divide $B(M)$ into an interval system that shows that $M$ is correctable. 

1. If $\forall k>j$ we have that in $U_k$, $M$ is tonal with same key as in $V$ (or $\sup V$ is the endpoint of $M$), then let us append $(V \cup \bigcup_{k>j} U_k)\cap \mathrm{Dom}~M$ to $\mathfrak I$, as the next interval for showing correctability. In this interval $M$ is tonal with key $T$. Also, only triad-valued points of $\mathrm{Dom}~M$ accumulate to the only two boundary points that $\mathrm{Dom}~M$ has, which are the starting point and the endpoint.

2. Else, $\exists k>j$ such that in $U_k$ there is a key $T'$, since Definition \ref{tonal} implies that in $\sup\mathrm{Dom}~M$ there has to be tonality with a key. Then let us define $s= \sup \lbrace x \in \left] \inf V, \sup U_k \right[ \vert~M\text{ is tonal in x with key }T\rbrace$ and $i=\inf \lbrace x \in \left] \inf V, \sup U_k \right[ \vert~M\text{ is tonal in x with key }T'\rbrace$. Note that $s$ and $i$ are finite, and the tonality of $M$ implies weak tonality with key $T$ in $s$ and weak tonality with key $T'$ in $i$, therefore $s \leq i$. By Definition \ref{modi}, $s<i$ follows. Then, on the entire interval $\left[ s, i \right[$, $M$ modulates from $T$ to $T'$ complying with classical harmony apart from the chord-changing points. Let us append $\left[ \mathrm \inf~ V, s \right[$ (as an interval with key $T$) and $\left[ s, \sup~ U_k \right[$ (as an interval of a $T \to T'$ modulation) to the set $\mathfrak I$ of the intervals showing the correctability of $M$. Let us put $T=T'$, $j=k$ and $V=\left[ i, \sup U_k \right[$ and return to the starting alternative of the sequential process.  

Each time the process restarts, the endpoint of the current $V$ is the endpoint of $U_k$ for a $k$ larger by at least 1 than the one in the previous turn. This ensures that the process is finite, further the number of turns is not more than $n$: when $\sup V=\sup \mathrm{Dom}~M$ holds, the process is finished. The intervals given by the process show that $M$ is correctable: the intervals with an odd index are intervals where $M$ has a key and the ones with an even index contain modulation from the previous interval's key to the following one's. Thus, each point of $\mathrm{Dom}~M \setminus A(M)$ has a classical neighbourhood containing $B(M)$. 

The second case is that there is no tonality with any key in $\inf~U_1$. In this case, by Definition \ref{korrekt}, the whole interval $U_1$ belongs to a modulation from some key $T_1$ to another one $T_2$. Then let $s:=\inf \{ x \in U_1 | ~M\text{ is tonal on $[x,U_1[$ with key $T_2$} \}$ and $I_0:=[\inf~U_1,s[$. Now, $M$ restricted to $\mathrm{Dom}~M \setminus I_0$ is such that it is tonal in its starting point with a key, and thus the intervals $I_1,\ldots,I_{2n+1}$ can be constructed similarly to the case when $M$ is tonal in its starting point. This finishes the proof. \end{proof}

We note that each condition of the theorem is necessary, i.e. for each one of them, one can find a tonal piece $M$ that violates it and therefore is not correctable.
\begin{itemize}
\item A tonal piece may have a packing point.
\item A tonal piece $M$ with $B(M)=\mathbb{R}_0^+$ may have no packing point but the infimum of the lengths of chord intervals can be still zero (in this case the sum of the chord lengths must be infinite).
\item A tonal piece $M$ with $B(M)=\mathrm{R}_0^+$ and positive infimum of chord lengths can contain infinitely many modulation intervals. In this case it can occur that $\forall t \in \left] \inf \mathrm{Dom}~M,\infty \right[$, we have that $M \vert_{\left[ 0, t \right[}$ is correctable but $M$ itself is not correctable. In this case, a \emph{final key} of $M$ cannot be defined. The final key is a main characteristic of finite feasible tonal pieces, in particular, in classicist music the names of the pieces of music often contain the final key of the piece (e.g. symphony in G major etc.).
\item A tonal, feasible piece is not by all means correctable if it is not pauseless. Indeed, take a piece that complies with classical harmony and contains no modulations, write some positive amount of it, and then continue with the same piece in a different key. The two connected components of the resulting piece with classical harmony by themselves comply , but the entire piece is not correctable because the modulation between the two keys is missing. For an example for such a piece, see Figure \ref{rosszpelda0}.
\end{itemize}
\begin{figure} \label{rosszpelda0}
    \centering
    \includegraphics[scale=0.4]{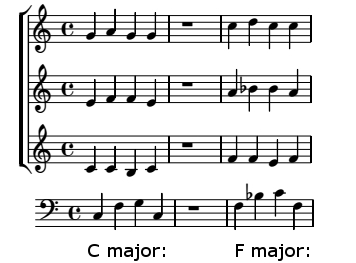}
    \caption{A non-pauseless tonal piece that is not correctable due to lack of modulation.}
\end{figure}

Knowing the fundamental theorem of tonality, we can embed the basic chord-changing compositional principles of classical harmony (see e.g. \cite[p.~30--183.]{kesztler} or \cite[Section 5.11]{benson}) in our mathematical axiom system. We present a scheme and an example how these compositional principles can be stated, knowing Theorem \ref{vécsey}.
\begin{definition}[Scheme of chord-changing rules] \label{séma}
Let $M$ be a tonal, feasible, pauseless homophonic four-part piece and $t$ a chord-changing point of $M$. If $M$ complies with classical harmony in $t$, then [\emph{conditions on the chord change in $t$}]. 
\end{definition}
\begin{definition}[Prohibition of parallel octaves.]
Let $M$ be a feasible tonal four-part piece and $t$ a chord changing point of $M$ with $M(t_-)=A$, $M(t)=B$. If the interval of two voices of $A$ is an integer number of octaves, then the interval of the corresponding voices of $B$ is not the same interval.

\end{definition}
This scheme guarantees that new chord-changing compositional principles can be added to the axiom system of classical harmony as long as the compositional principles do not contradict each other. The general historic experience is that the set of four-part rules of classical harmony is \emph{consistent}, equivalently that the axiom system actually has a model. In the full Hungarian version of the paper, we precisely described the classical chord-changing principles and the modulational rules. As for a model, we provided examples that satisfy various combinations of our axioms, including direct modulations consisting of 7 chords between any two major keys. By structure of our axiom system, in the case of a correctable piece with finitely many chords, compliance with classical harmony can be equivocally decided knowing merely the \emph{chord sequence} of the piece, and thus we also see that for these pieces, our axiom system is also \emph{complete}. We note that one of the chord-changing rules, the \emph{principle of least motion}, is hard but possible to formalize mathematically precisely in full generality. There exist interesting mathematical results about this principle in the literature, see e.g. \cite[p.~4--6.]{tymoczko}. 

%The last chapter of my Bachelor's thesis contains results of the compliance of \emph{modulations} with classical harmony. In Section \ref{mod} of the Appendix, we give an introduction to the modulational rules. Already from Classicism, composers tend to use complex modulations, visiting many keys before arriving in the actual target key. Following the educational convention, we restrict our investigations to the case of the \emph{regular seven-chord modulations}, which are still general enough to make the following existence theorem hold. 
%\begin{theorem}[The fundamental theorem of modulation.]
%Let $T_1$ and $T_2$ be both major or both minor keys. Then there is a $T_1 \to T_2$ modulation consisting of seven chords which complies with classical harmony.
%\end{theorem}
%This statement has been well-known and widely used for two centuries in music theory and practice, and it can be proven by enumerating all corresponding modulations (e.g. see the construction at \cite[p.~234--235.]{kesztler}). 

%Together with the detailed chord-changing compositional principles, the list of all elements of $CA(T)$ for all the 36 keys $T$ on the equal-tempered piano and the modulation rules, we have finished the axiomatization of classical harmony. 
The logical ordering of musical notions and the mathematically simpler results in this paper can now be used for writing a new classical harmony coursebook. We also plan to do experiments on the possibilities and barriers of composing four-part chorales by Markov models, revising the results of \cite{koralocska}.

\appendix

\section{Modulations} \label{modulations}

In this last extra section, we sketch the classical compositional principles for modulations with seven chords. Here we do not enumerate all exact details of the technically rather complicated compositional principles themselves, neither the \emph{altered chords}, the elements of the convergence areas of the keys not detailed so far. Our whole model for modulations that can be found in a full Hungarian version (not mentioned here for anonymity), in which all the altered chords of the keys and precise formulations of the modulational axioms takes place, is mathematically complete ---but still far from universal, as it only describes modulations consisting of seven chords. %It is technically quite impossible to give a clear definition for modulations which is based on the properties of every piece of the Viennese classical composers. While four-part pieces without modulations are very close to the real-music Bach chorales and also to some genres of the Classicism, education of music theory has always been using a strongly simplified model for describing modulations. This is the concept of
The seven chords of the modulation do not have to ensure that there is tonality with key $T_1$ in the starting point of the modulations, but the new key $T_2$ has to be established by a complete authentic cadence, according to the compositional principles.

Definition \ref{modka} for modulations guarantees that in the context of modulations it is enough to consider feasible and pauseless homophonic four-part pieces. Pauselessness ensures that chords that cannot follow each other by actual chord-changing also will not occur directly after each other, separated by a pause. In general, pauses can weaken the impact of irregular chord progression and there are some examples in music history when composers use this. But in the case of modulations, in Viennese classicism, the basic aim is to make the key change as smooth as possible and to find some connection between the beginning key and the target key, therefore such trickery is not advised.

In the following, we establish the notions that are necessary to state the remaining compositional principles for modulations. First, we define pauseless extensions of general homophonic four-part pieces, in order to obtain a completely pauseless paradigm for the modulations that incorporates non-pauseless pieces as well. Using these, we define chord sequences, which provides a simpler interpretation for feasible pieces than the one described in Section 4 of our paper. In the same time, note that the chorales cannot be defined mathematically precisely using only chord sequences, therefore also the continuous time construction from our Section 4 is useful.

Let $M$ %be an arbitrary homophonic four-part piece and $Pp(M)$ be the set of the packing points of $M$. If $t \in \overline{\mathrm{Dom}~M}$ is a packing point, $\exists \varepsilon>0$ for which the interval $\left] t, t+\varepsilon \right[$ does not contain any packing points. This ensures that the following construction gives a well-defined function $f$. Let $f(x)=x$ for each packing point $x$ of $M$, and for $y \in B(M) \setminus Pp(M)$ let $f(x)=\inf\limits_{x \in Pp(M), x<y} f(y)$. This function is increasing, and therefore the cardinality of its jumps is countable. Hence, $M$ has only countably many packing points.
be a homophonic four-part piece that has no packing point apart from its endpoint, then we have a (possibly finite) sequence of disjoint consecutive intervals $(I_i)$ of which $B(M)$ consists, for all of which either $I_i \subseteq \mathrm{Dom}~M$ and $M$ has the constant value of a chord with $\inf I_i$ and $\sup I_i$ being either chord-changing points or boundary points of $\mathrm{Dom}~M$, or $I_i$ is a maximal general pause interval in the sense that $\inf I_i$ and $\sup I_i$ are boundary points of $\mathrm{Dom}~M$. In this case, there is a simple way to construct a pauseless extension $\overline{M}$ of $M$, given by an extension from $\mathrm{Dom}~M$ to $B(M)$, this is called the \emph{right-invariant pauseless extension} of $M$:  \[ \overline{M}(t)= \begin{cases} M(t), ~ \text{if}~ t \in \mathrm{Dom}~M, \\ M[I_i]=M(\sup \lbrace u \in \mathrm{Dom}~M \vert u<t \rbrace), ~\text{if} ~t \in I_{i+1},I_{i+1} \cap \mathrm{Dom}~M=\emptyset. \end{cases} \] \\ In this case, $(\overline{M}(t_i), t_i \in A(\overline{M}))$ is called the \emph{chord sequence} of $M$, here $t_i$'s follow each other in their order in $\mathbb{R}_0^+$. We omit the proof of the following proposition, which shows the role of the playing function group $PL(\mathbb{R})$ in the topology of four-part pieces.

\begin{proposition}
Let $M_1$ and $M_2$ be homophonic four-part pieces such that the values of the chord sequences of $M_1$ and $M_2$ are the same. Then $\exists \theta \in PL(\mathbb{R})$: $M_2=M_1 \circ \theta$.
\end{proposition}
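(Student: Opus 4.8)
The plan is to read off from each piece the order-preserving decomposition of its cover into chord intervals and maximal general-pause intervals furnished by the paragraph preceding the statement, and then to build $\theta$ by gluing together increasing affine maps that carry the $k$th block of $M_2$ onto the $k$th block of $M_1$. Concretely, I would write $B(M_2)$ as a disjoint union of consecutive left-closed right-open blocks $I_1, I_2, \ldots$ and $B(M_1)$ as $J_1, J_2, \ldots$, where each block is either a chord interval (on which the piece is constant) or a maximal general-pause interval. Because the two chord sequences have the same values, deleting the pause blocks leaves two sequences of chord intervals carrying literally the same sequence of chord values; this gives a canonical order-preserving bijection between the chord blocks of $M_2$ and those of $M_1$, and the interleaved pause blocks are matched accordingly.

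On each block $I_k = [c_{k-1}, c_k[$ I would take $\theta$ to be the unique increasing affine bijection onto the corresponding block $J_k = [d_{k-1}, d_k[$, namely $\theta(t) = d_{k-1} + \frac{d_k - d_{k-1}}{c_k - c_{k-1}}(t - c_{k-1})$, and I would extend $\theta$ to the part of $[0,\infty[$ lying before the starting point of $M_2$ (and, if the endpoint is finite, after it) by an increasing affine map sending the boundary points of $B(M_2)$ to those of $B(M_1)$. Since consecutive blocks share an endpoint and $\theta$ assigns the same value $d_k$ there from both sides, the glued $\theta$ is continuous on $[0,\infty[$; it is strictly increasing because every slope $\frac{d_k-d_{k-1}}{c_k-c_{k-1}}$ is strictly positive; and on the interior of each block it is affine, hence $C^2$ with nowhere-vanishing derivative. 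Taking the blocks themselves (together with the tail intervals) as the witnessing partition then places $\theta$ in $PL(\mathbb R)$. Finally, for $t$ in a chord block $I_k$ one has $\theta(t) \in J_k$, and both $M_1(\theta(t))$ and $M_2(t)$ equal the common $k$th chord value, while on pause blocks both sides are undefined in corresponding fashion; hence $M_2 = M_1 \circ \theta$.

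The step I expect to be the main obstacle is verifying the playing-function requirement $\inf_i \lambda(I_i) > 0$, i.e. that the block lengths are bounded away from $0$: this is exactly where I would invoke that the pieces have positive infimum of chord lengths, since a piece with chord lengths tending to $0$ forces $\theta$ to have a corner at every chord boundary and then admits no partition into uniformly long smoothness-blocks, so the statement can genuinely fail without that hypothesis. A second, more delicate point is the alignment of pauses: the chord sequence is built from the right-invariant pauseless extension and is therefore blind to where the pauses sit, so matching the pause blocks of $M_2$ to those of $M_1$ presupposes that their pause patterns correspond. I would handle this by restricting to the feasible, pauseless pieces that are the relevant objects in the modulation theory of this section---where the pause issue evaporates and the affine gluing above is a complete argument---or else by reading the hypothesis as requiring the pause-refined chord sequences to agree.
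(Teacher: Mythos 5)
The paper explicitly omits its proof of this proposition, so there is no argument of record to compare against; your piecewise-affine gluing along matched blocks is the natural construction and is evidently what the author intended. For feasible, pauseless pieces the argument is complete: consecutive blocks share endpoints, so the glued $\theta$ is continuous and strictly increasing; it is affine on block interiors, hence twice continuously differentiable with nonvanishing derivative there; feasibility bounds the block lengths away from zero, which is exactly the condition $\inf_i \lambda(I_i)>0$ in the definition of a playing function; and on the $k$th block both $M_2(t)$ and $M_1(\theta(t))$ equal the $k$th value of the common chord sequence.

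Your two closing caveats are genuine defects of the proposition as literally stated, not merely of your proof, and you are right to flag them. The pause issue is the more serious one: the chord sequence is read off the right-invariant pauseless extension and so carries no pause data at all; worse, a general pause separating two areas of the \emph{same} chord is absorbed without creating a chord-changing point, so the two pieces need not even have the same number of chord blocks (a one-chord pauseless piece and a chord--pause--same-chord piece have identical, empty chord sequences, yet no $\theta$ can relate them, since $\theta^{-1}(\mathrm{Dom}\,M_1)$ is an interval whenever $\mathrm{Dom}\,M_1$ is). Hence your ``canonical order-preserving bijection between chord blocks'' only exists once you restrict to pauseless pieces, which is indeed the setting the surrounding modulation discussion works in. On the feasibility point you slightly overclaim: without a positive infimum of chord lengths your \emph{affine} $\theta$ admits no admissible partition, but a non-affine $C^2$ interpolant of the constraints $\theta(c_k)=d_k$ may still exist for particular pairs of pieces (e.g.\ $c_k=1-2^{-k}$ and $d_k=1-3^{-k}$ are matched by $\theta(x)=1-(1-x)^{\log_2 3}$ near the packing point), so the claim that ``the statement can genuinely fail'' without that hypothesis would need an actual counterexample; it is safer to say that feasibility is what makes your construction, under the standing assumptions of this section of the paper, go through.
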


When describing modulations, we will not make any difference between feasible homophonic four-part pieces which have the same chord sequence. The expressions ‘‘a chord sequence is tonal/is correctable/complies with classical harmony'' will be used in the sense that every homophonic four-part piece with the given chord sequence has this property. 

Firstly, we have to establish a connection between modulational axioms and the definition of the correctable piece. We call modulations which satisfy not only Definition \ref{modka} but also the first modulational axiom (Compositional principle \ref{modi}) \emph{basic modulations}. For a $T_1 \to T_2$ modulation that complies with classical harmony apart from chord changes, the next necessary condition that we require is tonality in the starting point of the first degree triad of $T_1$ that opens the modulation, with key $T_1$, and to also tonality in the endpoint of the first degree triad of $T_2$ that opens the modulation (the existence of these chords is guaranteed by the first modulational axiom).

Now we can turn to the basic idea of modulations complying with classical harmony: the chord sequence of the ---pauseless, feasible--- $T_1 \to T_2$ modulation section has to be able to be divided into three disjoint segments (left-closed, right-open intervals) that cover the whole chord sequence \cite[p.~36]{statusquo}:
\begin{description}
\item[Neutral phase \emph{(N)}] In this segment, which is opened by the Ist degree triad, the key of the piece is still $T_1$ (i.e., each element of $N$ is a member of $CA(T_1)$), but there are no secondary dominant chords. In the whole modulation after the first chord there is neither in $T_1$ nor in $T_2$ any root position Ist degree triad until the tonic main triad of $T_2$ occurs and closes the modulation. 
\item[Fundamental step \emph{(F)}] If $T_1$ and $T_2$ are of the same type and they are neighbours in the circle of fifths, this whole segment may be empty. Otherwise here dominant chords of different keys follow each other. Only the last can be a (major/diminished/in the case of minor $T_1$ and minor $T_2$ augmented) triad, the ones before have to be seventh chord inversions. These seventh chords have to follow each other by \emph{elision}\footnote{ We generally use the word ‘‘elision'' for chord progression of inversions of different seventh chords, without the first seventh chord resolving to its tonic. The lack of resolution is expressed by ‘‘elision'', a word of Greek origin for ‘‘omission''. Chord progression using elision always has to use \emph{chromatics} in order to make it possible to comply with classical harmony. Chromatics means the sequence of at least two \emph{semitone steps} after each other in one voice. This semitone sequence has to be such that for each segment of it where the consecutive semitone steps are taken in the same direction, there exists a key in the scale of which, exactly every other step changes degree (i.e, every other step is a minor second and the remaining steps are augmented primes).}. The last chord of $F$ may be a triad and the previous chord may resolve to it.
\item[Cadence \emph{(C)}] The modulation has to be finished by a complete authentic cadence in the new key $T_2$, this shows and stabilizes the tonality in the new key. It may occur that we do not write a cadence in each key, but a modulation progress is only finished when we reach a cadence in some key. It is sure that the last chord before the closing degree I triad is the degree V triad or degree V dominant seventh chord of $T_2$ in the segment $C$. The tonic and subdominant chords of $T_2$ that preceed this chord belong already to $C$ (and not $F$) if and only if they are built up from scale tones in $T_2$, otherwise they belong to $F$.
\end{description}
If a chord sequence of a basic modulation has all the properties that we have introduced in this section, and it is the member of one of the following three modulation types, then we say that it complies with classical harmony apart from its chord-changing points. If its chord-changings are also correct, we say that the modulation complies with classical harmony. The three possible modulation types are:
\begin{description}
\item[Diatonic] For the last chord $H$ of $N$ we have $H \in CA(T_2)$, and every chord after this is convergent in $T_2$. This time $F$ usually consists of at most one chord. This is the smoothest possible key change type, but it is often not possible between keys further away from each other.
\item[Enharmonic] The last chord of $N$ or the first chord of $F$ is an element of $T_1$ that is enharmonic with some element of $CA(T_2)$. The most common enharmonic modulation types use the enharmonic equivalence of diminished seventh chords or augmented triads in different keys. After this chord occurs, we consider it as an element of $CA(T_2)$, and make a chord progression in $T_2$ ending with an authentic cadence.
\item[Chromatic] There is elision in the modulation chord sequence. Very far away leading modulations, such as C major $\to$ F$\sharp$ major can be most conveniently feased this way. In most of the cromatic modulations $\sharp F \geq 2$ holds.
\end{description}
These three categories do not exclude each other pairwise, while it is difficult to accomplish a modulation that is both diatonic and enharmonic at the same time. In the music score collection of the thesis we show examples of both enharmonic and chromatic and both diatonic and chromatic modulations.
Modulational compositional principles finish our axiomatization work. 

%By providing more and more detailed chord-changing compositional principles based on the research of real classical music pieces and Bach's chorales, the four-part model can be refined again and again. The logical ordering of musical notions and the mathematically simpler results in this paper can now be used for writing a new classical harmony coursebook. We also plan to do experiments on the possibilities and barriers of composing four-part chorales by Markov models, revising the results of \cite{koralocska}.
\newpage
\section{Convergent chords in major and minor keys} \label{durmoll}
In a major key $T$:
%%%%%%%%%%Dúrban:

\vspace{5pt}
\hspace{-14pt}
\begin{tiny}
\begin{tabular}{|ll|c|c|c|c|c|}
\hline
Notation & Type & Convergent inversions & Function & Typical following chords \\ \hline 
 1. Diatonic & triads: & & & \\ \hline 
 $\mathrm{I}$ & major triad & all & T & almost all elements of $CA(T)$  \\ \hline
$\mathrm{II}$ & minor triad & root, first & S & $\mathrm{V}^{(7)}$, $\mathrm{II}_{\sharp}^7$ \\ \hline
 $\mathrm{III}$ & minor triad & root, first & - & $\mathrm{IV},~\mathrm{VI},~\mathrm{III}_{\sharp}^7$ \\ \hline
 $\mathrm{IV}$ & major triad & all & S & $\mathrm{V}^{(7)},~ \mathrm{II},~\underset{\sharp}{\mathrm{IV}^{7\flat}},~\mathrm{I}$ \\ \hline
 $\mathrm{V}$ & major triad & root, first & D & $\mathrm{I},~\mathrm{VI},~\underset{\sharp}{\mathrm{III}_5^6},~\underset{\sharp}{\mathrm{V}^7}$ \\ \hline
 $\mathrm{VI}$ & minor triad & root, first & T & $\mathrm{II}$, $\mathrm{IV}$, $\mathrm{VI}_{\sharp}^7$ \\ \hline
 $\mathrm{VII}$ & diminished triad & first (with the third dupl.) & D & $\mathrm{I}^{(6)}$, $\mathrm{VII}^{7\flat}$ \\ \hline
 2. Diatonic & sevenths:  & & & \\ \hline
 $\mathrm{I}^7$ & major minor seventh & all & T & $\mathrm{IV}$ \\ \hline
$\mathrm{II}^7$ & minor major seventh & all & S & $\mathrm{V}$ \\ \hline
$\mathrm{III}^7$ & minor major seventh & all & - & $\mathrm{VI}$ \\ \hline
 $\mathrm{IV}^7$ & major minor seventh & all & - & $\mathrm{VII}^6$, $\mathrm{V}$ \\ \hline
$\mathrm{V}^7$ &   dominant seventh & all & D & $\mathrm{I}$, $\underset{\flat}{\mathrm{I}^2}$ \\ \hline 
 $\mathrm{VI}^7$ & minor major seventh & all & T & $\mathrm{II}$, $\mathrm{VI}_{\sharp}^7$ \\ \hline
 $\mathrm{VII}^7$ & semi-dim. seventh & all & -& $\mathrm{I}$, $\mathrm{III}$ \\ \hline
 3. Diminished & sevenths (altered):  & & & \\ \hline
 $\underset{\sharp}{\mathrm{I}^{7\flat}}$ & dim. seventh & all & - & $\mathrm{II}$, $\mathrm{II}_2^{4(\sharp)}$ \\ \hline
$\underset{\sharp}{\mathrm{II}_{\sharp}^7}$ & dim. seventh & all & - & $\mathrm{III}$, $\mathrm{III}_2^{4(\sharp)}$ \\ \hline 
 $\underset{\sharp}{\mathrm{IV}^{7\flat}}$ & dim. seventh & all & S & $\mathrm{V}$, $\mathrm{V}^2$ \\ \hline
 $\underset{\sharp}{\mathrm{V}^7}$ & dim. seventh & all & - & $\mathrm{VI}$, $\mathrm{II}_2^{4(\sharp)}$ \\ \hline
 $\mathrm{VII}^{7\flat}$ & dim. seventh & all & D & $\mathrm{I}$, $\mathrm{I}^{2(\flat)}$\\ \hline
4. Secondary & dominant sevenths and triads:  & & & \\ \hline
 $\mathrm{I}^{7\flat}$& dominant seventh & all & T & $\mathrm{IV}$, $\underset{\flat}{\mathrm{IV}^2}$ \\ \hline
 $\mathrm{II}^\sharp$& major triad & all & S & $\mathrm{V}$, $\mathrm{V}^2$ \\ \hline
 $\mathrm{II}_{\sharp}^7$& dominant seventh & all & S & $\mathrm{V}$, $\mathrm{V}^2$ \\ \hline
 $\mathrm{III}_{\sharp}^7$ & dominant seventh & all & - & $\mathrm{VI}$, $\mathrm{VI}_2^{4(\sharp)}$ \\ \hline
 $\mathrm{IV}^{7\flat}$ & dominant seventh & third & - & $\mathrm{VII}^6$ \\ \hline
 $\mathrm{VI}_{\sharp}^7$ & dominant seventh & all & - & $\mathrm{II}$, $\mathrm{II}_{2}^{4(\sharp)}$ \\ \hline
 $\mathrm{VII} \tiny{\begin{smallmatrix} 7 \\ 5\sharp \\ \sharp \end{smallmatrix}}$ & dominant seventh & all & - & $\mathrm{III}$, $\mathrm{III}_{2}^{4(\sharp)}$ \\ \hline
 5. Augmented & sixth chords:  & & & \\ \hline
 $\underset{\flat}{\mathrm{VI}}^{6\sharp}$ & ($\sim$dominant seventh) & first (with the third dupl.) & S & $\mathrm{V}$ \\ \hline
$\underset{\flat}{\mathrm{VI}_{5\flat}^{6\sharp}}$ & ($\sim$dominant seventh) & first & S & $\mathrm{V}$ \\ \hline
6. Minor & subdominants \& tonics:  & & & \\ \hline
$\underset{\flat}{\mathrm{II} \tiny{\begin{smallmatrix} 6\sharp \\ 4 \\ 3 \end{smallmatrix}}}$ & irregular & second & S & $\mathrm{V}$ \\ \hline
$\mathrm{II}^{5\flat}$ & diminished triad & first & S & $\mathrm{V}^{(7)}$ \\ \hline
$\mathrm{II}_{5\flat}^7$ & semi-dim. seventh & all & S & $\mathrm{V}^{(7)}$ \\ \hline
$\underset{\flat}{\mathrm{II}}^{6\flat}$ & major triad$\ast$ & first (with the third dupl.) & S & $\mathrm{V}$ \\ \hline
 $\mathrm{VI}^{\flat}$ & minor triad & root, first & S & $\mathrm{V}^{(7)}$ \\ \hline
 $\underset{\flat}{\mathrm{VI}^{5\flat}}$ & major triad$\ast \ast$ & root & - & $\mathrm{V},~\mathrm{II},~\mathrm{IV}$ \\ \hline

\end{tabular}
\end{tiny}
\vspace{10pt}

In total, we have 97 convergent chord inversions. \\
$\ast$ This is the Neapolitan sixth of the minor key with the same first degree as $T$, see below. \\
$\ast \ast$ This is not a subdominant chord, but it comes also from the convergence area of the minor key with the same first degree as $T$. \\
\newpage Note that the augmented sixth chords are formally neither triads nor seventh chords complying with classical harmony according to our definitions, but they are enharmonic to (possibly fifth-deficient) dominant seventh chords. The same applies in minor. Further, the chord $\underset{\flat}{\mathrm{II} \tiny{\begin{smallmatrix} 6\sharp \\ 4 \\ 3 \end{smallmatrix}}}$ does not comply with classical harmony; the same holds for its analogue $\mathrm{II} \tiny{\begin{smallmatrix} 6\sharp \\ 4 \\ 3 \end{smallmatrix}}$ in minor as well as for the dominant nona chord. However, they are widely used in practice, and the axiom system presented in this paper can easily be extended in such a way that these chords also comply with it.  

In a minor key $T$:

\vspace{10pt}
\hspace{-14pt}
\begin{tiny}
\begin{tabular}{|ll|c|c|c|c|c|}
\hline
Notation & Type & Convergent inversions & Function & Typical following chords \\ \hline 
1. Diatonic & triads: & & & \\ \hline 
 $\mathrm{I}$ & minor triad & all & T & almost all elements of $CA(T)$ \\ \hline
 $\mathrm{II}$ & diminished triad & first & S & $\mathrm{V}_{\sharp}^{(7)}$, $\mathrm{II} \tiny{\begin{smallmatrix} 7 \\ 5\sharp \\ \sharp \end{smallmatrix}}$ \\ \hline
 $\mathrm{III}^{5\sharp}$ & augmented triad & root, first & - & $\mathrm{IV},~\mathrm{VI}, \mathrm{I}$ \\ \hline
 $\mathrm{IV}$ & minor triad & all & S & $\mathrm{V}_{\sharp}^{(7)},~ \mathrm{II},~\underset{\sharp}{\mathrm{IV}_{\sharp}^7},~\mathrm{I}$ \\ \hline
$\mathrm{V}^{\sharp}$ & major triad & root, first & D & $\mathrm{I},~\mathrm{VI}$ \\ \hline
$\mathrm{VI}$ & major triad & root, first & - & $\mathrm{II}$, $\mathrm{IV}$, $\mathrm{V}^{\sharp}$ \\ \hline
 $\underset{\sharp}{\mathrm{VII}}$ & diminished triad & first (with the third dupl.) & D & $\mathrm{I}^{(6)}$, $\underset{\sharp}{\mathrm{VII}^{7}}$ \\ \hline
 2. Diatonic & sevenths:  & & & \\ \hline
$\mathrm{I}^{7\sharp}$ & minor augmented seventh B & all & - & $\mathrm{IV}$ \\ \hline
$\mathrm{II}^7$ & semi-diminished seventh & all & S & $\mathrm{V}_{\sharp}^{(7)}$ \\ \hline
 $\mathrm{III}^7$ & augmented major seventh & all &-& $\mathrm{VI},~\mathrm{V}_{\sharp}^7,~\mathrm{I}$ \\ \hline
$\mathrm{IV}^7$ & minor major seventh & all & - & $\mathrm{VII}^6$, $\mathrm{V}_{\sharp}$ \\ \hline
$\mathrm{V}_{\sharp}^7$ & dominant seventh & all & D & $\mathrm{I}$, $\underset{\natural}{\mathrm{I}_2^{4\sharp}}$ \\ \hline 
$\mathrm{VI}^7$ & major minor seventh & all & T & $\mathrm{II}$ \\ \hline
3. Diminished & sevenths (altered):  & & & \\ \hline
 $\underset{\sharp}{\mathrm{VII}^7}$ & diminished seventh & all & D & $\mathrm{I}$, $\mathrm{I}_2^{4\sharp}$ \\ \hline
 $\underset{\sharp}{\mathrm{III}^{7\flat}}$ & diminished seventh & all & - & $\mathrm{IV}$, $\mathrm{IV}_2^{4(\sharp)}$ \\ \hline 
$\underset{\sharp}{\mathrm{IV}_{\sharp}^{7}}$ & diminished seventh & all & S & $\mathrm{V}^{\sharp}$, $\mathrm{V}_2^{4\sharp}$ \\ \hline
4. Secondary & dominant triads and sevenths:  & & & \\ \hline
$\mathrm{I}_{\sharp}^{7}$& dominant seventh & all & T  & $\mathrm{IV}$, $\mathrm{IV}^2$ \\ \hline
$\mathrm{II}_{\sharp}^7$& dominant seventh & all & S & $\mathrm{V}^{\sharp}$, $\mathrm{V}_2^{4\sharp}$ \\ \hline
$\mathrm{III}^{\sharp}$ & major triad & root & - & $\mathrm VI$ \\ \hline
$\underset{\natural}{\mathrm{VII}}$ & major triad & root & - & $\mathrm{III}^7$ \\ \hline
$\underset{\natural}{\mathrm{VII}^7}$ & dominant seventh & all & - & $\mathrm{III}^7$ \\ \hline
5. Augmented & sixth chords:  & & & \\ \hline
 $\mathrm{VI}^{6\sharp}$ & ($\sim$dominant seventh) & first (with the third dupl.) & S & $\mathrm{V}^{\sharp}$ \\ \hline
 $\mathrm{VI}_{5}^{6\sharp}$ & ($\sim$dominant seventh) & first & S & $\mathrm{V}^{\sharp}$ \\ \hline
$\mathrm{II} \tiny{\begin{smallmatrix} 6\sharp \\ 4 \\ 3 \end{smallmatrix}}$ & irregular & second & S & $\mathrm{V}^{\sharp}$ \\ \hline
 6. Picardian & first degree triad:  & & & \\ \hline
 $\mathrm{I}^{\sharp}$ & major triad & root & T & none (final chord) \\ \hline
 7. Neapolitan & sixth chord:  & & & \\ \hline
 $\mathrm{II}^{6\flat}$ & major triad & first (with the third dupl.) & S & $\mathrm{V}^{\sharp}$ \\ \hline
 8. Dominant & nona chord:  & & & \\ \hline
 $\mathrm{V} \tiny{\begin{smallmatrix} 9 \\ 7 \\ \sharp \end{smallmatrix}}$ & dominant nona chord & root (fifth-deficient) & D & $\mathrm{I}$ \\ \hline
\end{tabular}
\end{tiny}
\vspace{10pt}

In total, only 73 convergent chord inversions. %%%%%%%%%%%%%%%%%%%%%%%%%%%%%%%%%%%%%%%%%%%%%%%%%%%%%%%%%%%%%%%%%%%%%%%%%%%
% Cut ending here to paste in your text.
%%%%%%%%%%%%%%%%%%%%%%%%%%%%%%%%%%%%%%%%%%%%%%%%%%%%%%%%%%%%%%%%%%%%%%%%%%%%%%%%%%%%%

% The Online Supplement has its own bibliography.
% You can include the relevant entries in the same bibtex database for simplicity, the only appear in the bibliography when cited.

%\bibliographystyle{tMAM}% The file tMAM.bst from author package must be in your working directory.
% It automatically formats the bibliography, don't change it at all. Do not include the extension .bst in this command, nor the extension .bib in the next command.
%\bibliography{MySubmissionBibTexDatabase}% The file MySubmissionBibTexDatabase.bib has BibTeX entries you create. You can download them from MathSciNet, SpringerLink, individual JMM article websites under ``Export Citation,'' or copy a template from a website and fill in the fields:
% http://www.cs.vassar.edu/people/priestdo/tips/bibtex
% https://verbosus.com/bibtex-style-examples.html
% http://www.rw.cdl.uni-saarland.de/~martin/bibtex.html
\subsection*{Acknowledgements} I would like to thank my Bachelor's thesis supervisor \'{A}kos G. Horv\'{a}th, also R. S. Sturman and S. McLaughlin from the University of Leeds (UK) for their advice and support. I would like to thank \'{A}gnes Cseh and M\'{a}t\'{e} V\'{e}csey for proofreading. Theorem \ref{vécsey} was conjectured by  M\'{a}t\'{e} V\'{e}csey.

\addcontentsline{toc}{section}{References}
\bibliographystyle{tMAM}

\end{document}